\def\to{\longrightarrow}
\def\mapsto{\longmapsto}
\def\id{\operatorname{Id}}
\def\NN{{\mathbb N}}
\def\QQ{{\mathbb Q}}
\def\RR{{\mathbb R}}
\def\ZZ{{\mathbb Z}}
\def\PP{{\mathbb P}}
\def\op#1{{\operatorname{#1}}}
\def\slzttinv{{\SL{2}\left(\ZZ[t, t^{-1}]\right)}}
\def\SL#1{{\mathbf{SL_{#1}}}}
\definecolor{grey}{gray}{.6}
\newtheorem{theorem}{Theorem}
\newtheorem{lemma}[theorem]{Lemma}
\title{Infinite-dimensional cohomology of $\slzttinv$}
\author{Sarah Cobb}
\newcommand{\Addresses}{{
  \bigskip
  \footnotesize

  S.~Cobb, \textsc{Department of Mathematics, Midwestern State University\\  \rule{5em}{0pt} 3410 Taft Boulevard, Wichita Falls, TX 76308}\par\nopagebreak
  \textit{E-mail address}, \texttt{sarah.cobb@mwsu.edu}
}}
\begin{document}


\maketitle

\begin{abstract}
For $J$ an integral domain and $F$ its field of fractions, we construct a map from the 3-skeleton of the classifying space for $\Gamma = \SL{2}(J[t,t^{-1}])$ to a Euclidean building on which $\Gamma$ acts.
We then find an infinite family of independent cocycles in the building and lift them to the classifying space, thus proving that the cohomology group $H^2\left(\SL{2}\left(J[t, t^{-1}]\right); F \right)$ is infinite-dimensional.
\end{abstract}

\section{Introduction}

Let $J$ be an integral domain with identity and $F$ its field of fractions. Our goal in this paper is to prove the following theorem:

\begin{theorem} \label{mainthm}
$H^2\left(\SL{2}\left(J[t, t^{-1}]\right); F \right)$ is infinite-dimensional.
\end{theorem}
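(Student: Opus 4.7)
The road map suggested by the abstract breaks into three stages.

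First, identify the building on which $\Gamma$ acts. The field $F(t)$ carries two natural valuations $v_0$ and $v_\infty$ at $t=0$ and $t=\infty$, with completions $F((t))$ and $F((t^{-1}))$; the Bruhat--Tits building for $\SL{2}$ over each completion is a regular tree, and I expect the building $X$ to be the product $X = T_0 \times T_\infty$ of these two trees, a 2-dimensional $\op{CAT}(0)$ square complex on which $\Gamma$ acts cellularly via the inclusion $J[t,t^{-1}] \hookrightarrow F((t)) \times F((t^{-1}))$. Since $X$ is contractible, equivariant obstruction theory produces a $\Gamma$-equivariant map $\phi: E\Gamma^{(3)} \to X$, constructed cell-by-cell on orbit representatives and then extended equivariantly; the target being 2-dimensional means every 3-cell is filled in by contractibility, and no genuine obstruction appears below degree 3.

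Second, produce an infinite family of 2-cocycles on $X$. Because $X$ is 2-dimensional, every $F$-valued 2-cochain is automatically a cocycle, so the only constraints are $\Gamma$-equivariance and linear independence of the resulting classes in the equivariant cohomology of $X$. The plan is to exploit the infinitely many $\Gamma$-orbits of square faces in $X$ lying along distinct sectors at infinity: for each index $n$, choose a 2-cell $\sigma_n$ whose $\Gamma$-orbit is disjoint from those of $\sigma_m$ for $m \neq n$, and let $c_n \in C^2(X;F)$ be the indicator cochain of $\Gamma \cdot \sigma_n$ with value $1 \in F$. Disjointness of supports then gives linear independence of the $c_n$ in the building's equivariant cohomology.

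Third, pull the $c_n$ back via $\phi^*$ and show the resulting classes in $H^2(\Gamma; F)$ are linearly independent. This is the main obstacle and will require explicit geometric input. The strategy is to construct, for each $m$, a 2-cycle $z_m$ on $E\Gamma^{(3)}$ arising from an embedded $\ZZ^2 \subset \Gamma$ generated by a commuting pair of unipotents (for instance, elementary matrices with Laurent polynomial entries of increasing degree), so that $\phi_*(z_m)$ traces out a 2-flat in $X$ that contains $\sigma_m$ but avoids $\sigma_n$ for $n < m$. Such a choice forces the pairing matrix $\la \phi^* c_n, z_m \ra$ to be upper-triangular with nonzero diagonal entries, which immediately yields linear independence of the classes $[\phi^* c_n]$ in $H^2(\Gamma; F)$ and hence infinite-dimensionality. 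The subtlety is that both the squares $\sigma_n$ and the matching family of detecting flats $\phi_*(z_m)$ must be chosen compatibly; arranging this bookkeeping, and in particular verifying that the candidate tori in $\Gamma$ map to the intended flats of $X$, is where the real technical work of the paper should lie.
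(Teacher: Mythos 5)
Your first stage is correct: the paper does set $X = T_\infty \times T_0$, a product of Bruhat--Tits trees for the valuations at $\infty$ and $0$, and it does build a $\Gamma$-equivariant map from (the $3$-skeleton of) a free $\Gamma$-complex to $X$ by inductive cell-attachment, using contractibility of $X$ to extend over each new cell. But there is a crucial constraint you omit: the map is forced to avoid a family of disjoint horoballs. The paper first isolates a horoball $H = \beta_\rho^{-1}([R+d,\infty))$ in $X$ so that the $\Gamma_F$-translates of $H$ are pairwise disjoint (Lemma \ref{nooverlap}) and so that the orbit $\Gamma x_0$ stays away from them (Lemma \ref{farawayhoroballs}); then $\psi$ is built to have image in $X \setminus \bigcup_\gamma \gamma H$. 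This horoball machinery is not cosmetic. It is exactly what makes the later sums over cosets of $U_\Gamma$ in $\Gamma$ finite (Lemma \ref{finite-sum}) and what lets one control where a lifted disk meets the stars $S_n^\downarrow$. Without it, there is no mechanism for evaluating any cochain against a compactly supported chain in the quotient.

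The second stage of your proposal is where the real gap is. You suggest taking, for each $n$, the indicator cochain of a $\Gamma$-orbit of $2$-cells and relying on disjointness of supports for independence. The paper does something genuinely different, and the difference matters. Its local cochains on the star $S_n^\downarrow$ of $x_n$ are \emph{coordinate} cochains: after parametrizing the $2$-cells of $S_n^\downarrow$ as $C^n_{a,b}$ with $a,b \in F$ (via a transitive action of a subgroup of unipotents, Lemma \ref{actstransitively}), one defines $\varphi_n(C^n_{a,b}) = ab$. This is not an indicator, and it is not even $U_n$-invariant cell-by-cell, since $u$ sends $C^n_{a,b}$ to $C^n_{a+a_{-n},b+a_n}$ and $ab \neq (a+a_{-n})(b+a_n)$ in general. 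The whole point of Lemma \ref{invariant} is that $\varphi_n$ \emph{is} invariant on cycles because a basic cycle $C^n_{x,y}-C^n_{x',y}-C^n_{x,y'}+C^n_{x',y'}$ evaluates to $(x-x')(y-y')$, a translation-invariant quantity, and every cycle decomposes into basic cycles. This bilinear structure, and hence the use of the field $F$ rather than just $\{0,1\}$, is what makes the summed cocycle $\Phi_n$ both well-defined on $\Gamma\backslash X_3$ and detectable. An indicator cochain on a single orbit cannot be invariant in the relevant sense (the unipotents act transitively on the cells of $S_n^\downarrow$, so a nontrivial indicator supported there breaks invariance), and an indicator of all of $S_n^\downarrow$ carries no usable information on cycles.

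Your third stage is in the right spirit but again needs the bilinear structure to close. The paper does pair against explicit $2$-chains $B_{2n}$ built from a diagonal element $a$ and a unipotent $u$, with all four corners in $\Gamma x_0$ so they lift to disks $\widetilde{B_{2n}}$ in $X_3$ (Lemma \ref{lift}); the boundaries of these disks lie outside $\Gamma H$ precisely because of the horoball-avoidance in the construction of $\psi$. The resulting intersection $B_{2n}\cap S^\downarrow_{2n}$ is exactly one basic cycle, so $\varphi_{2n}$ evaluates to $1$ on it, and the vanishing $\Phi_k(\Gamma\widetilde{B_{2n}})=0$ for $k>2n$ comes from a depth comparison inside $H$, not from disjointness of supports of indicator cochains. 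Your ``upper-triangular pairing matrix'' intuition is the right shape, but to get nonzero diagonal entries you need a cochain whose value on a basic cycle is a nonzero element of $F$, which is what $\varphi_n(C^n_{a,b})=ab$ provides and what an indicator cochain does not. In short: correct building, correct general outline, but the proposed cocycles are the wrong objects, and the horoball geometry that makes the construction work is missing.
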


This theorem generalizes several earlier results about the finiteness properties of $\SL{2}\left(\ZZ[t, t^{-1}]\right)$.
In \cite{KrsticMcCool1997}, Krsti\'c-McCool prove that $\SL{2}(J[t,t^{-1}])$, among other related groups, is not $F_2$. 

In \cite{BuxWortman2006}, Bux-Wortman use geometric methods to prove that $\SL{2}(\ZZ[t,t^{-1}])$ is also not $FP_2$. In particular, they use the action of $\SL{2}(\ZZ[t,t^{-1}])$ on a product of locally infinite trees. Bux-Wortman also ask whether their proof can be extended to show the stronger result that $H_2(\SL{2}(\ZZ[t,t^{-1}]);\ZZ)$ is infinitely generated. Knudson proves that this is the case in \cite{Knudson2008} using algebraic methods. Note that this result also follows from Theorem \ref{mainthm}: Since $\QQ$ is a field, \[H^2(\SL{2}(\ZZ[t,t^{-1}]);\QQ) \simeq \operatorname{Hom}(H_2(\SL{2}(\ZZ[t,t^{-1}]);\QQ),\QQ)\]

Since $\operatorname{Hom}(H_2(\SL{2}(\ZZ[t,t^{-1}]);\QQ),\QQ)$ is infinite dimensional, so is $H_2(\SL{2}(\ZZ[t,t^{-1}]);\QQ)$, which implies that $H_2(\SL{2}(\ZZ[t,t^{-1}]);\ZZ)$ is not finitely generated as a $\ZZ$-module.

The methods in this paper will be geometric. We will define two spaces on which $\SL{2}(J[t,t^{-1}])$ acts: one a Euclidean building as in \cite{BuxWortman2006}, and the other a classifying space for $\SL{2}(J[t,t^{-1}])$. A map between these spaces will allow us to explicitly  define an infinite family of independent cocycles in $H^2(\SL{2}(J[t,t^{-1}]);F)$.

The methods used are based on those of Cesa-Kelly in \cite{CesaKelly2013}, where they are used to show that $H^2\left(\mathbf{SL_3}(\ZZ[t]); \QQ \right)$ is infinite-dimensional. Wortman follows a similar outline in \cite{Wortman2013}.

\subsection*{Acknowledgments}

This paper is adapted from my dissertation, completed at the University of Utah.  I am thankful for support from the institution and its faculty.

I am deeply grateful to Kevin Wortman, my thesis advisor, for guiding me through this work.

I would also like to thank Kai-Uwe Bux for pointing out some errors in an earlier draft; Brendan Kelly and Morgan Cesa, for detailed explanations of their results; and Jon Chaika and Yair Glasner for helpful conversations.

\section{The Euclidean Building}

Throughout, let $J$ be an integral domain with identity, $F$ the field of fractions over $J$, and $\Gamma = \SL{2}(J[t,t^{-1}])$.


We begin by recalling the structure of a Euclidean building on which $\Gamma$ acts. The following construction uses the notation of Bux-Wortman in \cite{BuxWortman2006}. Let $\nu_\infty$ and $\nu_0$ be the valuations on $F(t)$ giving multiplicity of zeros at infinity and at zero, respectively. More precisely, $\nu_\infty\left(\frac{p(t)}{q(t)}\right)= \op{deg}(q(t))-\op{deg}(p(t))$, and $\nu_0\left(\frac{r(t)}{s(t)}t^n\right)=n$, where $t$ does not divide the polynomials $r$ and $s$. Let $T_\infty$ and $T_0$ be the Bruhat-Tits trees associated to $\SL{2}(F(t))$ with the valuations $\nu_\infty$ and $\nu_0$, respectively. We will consider each tree as a metric space with edges having length 1. Let $X=T_\infty \times T_0$.

Since $F((t^{-1}))$ (respectively $F((t))$) is the completion of $F(t)$ with respect to $\nu_\infty$ (resp. $\nu_0$), $\SL{2}(F((t^{-1})))$ (resp. $\SL{2}(F(t))$) acts on the tree $T_\infty$ (resp. $T_0$). Therefore the group $\SL{2}(F((t^{-1}))) \times \SL{2}(F((t)))$ acts on $X$. 

Throughout this paper, we will regard $\Gamma$ and $\SL{2}(F(t))$ as diagonal subgroups of $\SL{2}(F((t^{-1}))) \times \SL{2}(F((t)))$, which act on $X$ via that embedding.

Let $L_\infty$ (resp. $L_0$) be the unique geodesic line in $T_\infty$ (resp. $T_0$) stabilized by the diagonal subgroup of $\SL{2}(F(t))$. Let $\ell_\infty: \RR \to L_\infty$ (resp. $\ell_0: \RR \to L_0$) be an isometry with $\ell_\infty(0)$ (resp. $\ell_0(0)$) the unique vertex with stabilizer $\SL{2}(F[t^{-1}])$ (resp. $\SL{2}(F[t])$). Let $x_0=(\ell_\infty(0), \ell_0(0))$ serve as a basepoint of $X$ and $\Sigma = L_\infty \times L_0$ so that $\Sigma$ is an apartment of $X$. 

\section{The Action of $\Gamma$ on $X$}

The goal of this section is to establish certain large-scale features of the action of $\Gamma$ on $X$. In particular, we will find a horoball containing a sequence of points far from $x_0$ and show that the $\Gamma$-translates of this horoball are disjoint. The techniques here are similar to those used by Bux-Wortman in \cite{BuxWortman2011}.

For certain parts of the proof, it will be convenient to work with $\Gamma_F=\SL{2}(F[t,t^{-1}])$ instead of with $\Gamma$. Note that $\Gamma_F$ also acts on $T_\infty$ and on $T_0$ and therefore acts diagonally on $X$.

We also define the following useful subgroups:
\begin{equation*}
P = \left\{\left.\begin{pmatrix} a & b \\ 0 & a^{-1}\end{pmatrix}\right\vert a,b \in F(t)\right\}
\end{equation*}

\begin{equation*}
U = \left\{\left.\begin{pmatrix}1 & x \\ 0 & 1\end{pmatrix}\right\vert x \in F(t)\right\}
\end{equation*}

\begin{equation*}
A = \left\{\left.\begin{pmatrix}a & 0 \\ 0 & a^{-1}\end{pmatrix}\right\vert a \in F(t)\right\}
\end{equation*}

For any diagonal subgroup $G$ of $\SL{2}(F((t^{-1}))) \times \SL{2}(F((t)))$, we will use the notation
\begin{equation*}
G_\Gamma = G \cap \Gamma
\end{equation*}
and 
\begin{equation*}
G_{\Gamma_F} = G \cap \Gamma_F
\end{equation*}

\begin{lemma}\label{doublecosets}
The double coset space $\Gamma_F \backslash \SL{2}(F(t))/P$ is a single point.
\end{lemma}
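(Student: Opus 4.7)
The plan is to identify the coset space with projective space and use that $F[t,t^{-1}]$ is a PID. Recall that $P$ is the stabilizer in $\SL{2}(F(t))$ of the point $[1:0] \in \PP^1(F(t))$ (under the standard action $g \cdot [x:y] = [ax+by : cx+dy]$ for $g = \begin{pmatrix} a & b \\ c & d \end{pmatrix}$). Since this action is transitive, the map $gP \mapsto g \cdot [1:0]$ gives an $\SL{2}(F(t))$-equivariant bijection $\SL{2}(F(t))/P \cong \PP^1(F(t))$. Under this identification, the double coset space $\Gamma_F \backslash \SL{2}(F(t))/P$ corresponds to the orbit space $\Gamma_F \backslash \PP^1(F(t))$, so it suffices to show that $\Gamma_F$ acts transitively on $\PP^1(F(t))$.

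Given any point $[a:c] \in \PP^1(F(t))$, I would first rescale by a common denominator and a suitable power of $t$ to assume $a, c \in F[t, t^{-1}]$. Next, I would use that $F[t, t^{-1}]$ is a PID (being the localization of the PID $F[t]$ at the multiplicative set generated by $t$) to divide out by $\gcd(a, c)$ and assume $a, c$ are coprime in $F[t, t^{-1}]$. Then Bezout's identity produces $\alpha, \beta \in F[t, t^{-1}]$ with $\alpha a + \beta c = 1$.

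With these in hand, the matrix
\[
\gamma = \begin{pmatrix} \alpha & \beta \\ -c & a \end{pmatrix}
\]
lies in $\Gamma_F = \SL{2}(F[t,t^{-1}])$, since its entries are in $F[t,t^{-1}]$ and $\det \gamma = \alpha a + \beta c = 1$. A direct computation gives $\gamma \cdot [a:c] = [\alpha a + \beta c : -ca + ac] = [1:0]$, which exhibits an element of $\Gamma_F$ carrying the arbitrary point $[a:c]$ to $[1:0]$ and hence establishes transitivity.

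There is no real obstacle here; the only non-formal input is that $F[t, t^{-1}]$ is a PID so that Bezout applies. If anything required care, it would be verifying the identification $\SL{2}(F(t))/P \cong \PP^1(F(t))$ and the reduction step that clears denominators without changing the projective class, but both are standard.
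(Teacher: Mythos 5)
Your proof is correct and follows essentially the same route as the paper: identify $\SL{2}(F(t))/P$ with $\PP^1(F(t))$ via the stabilizer of $[1:0]$, then reduce to transitivity of $\Gamma_F$ on $\PP^1(F(t))$. The paper simply asserts that transitivity, whereas you supply the standard justification via clearing denominators and B\'ezout in the PID $F[t,t^{-1}]$; both are fine.
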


\begin{proof}
Consider the action of $\SL{2}(F(t))$ on $\PP^1(F(t))$ by $\begin{pmatrix} a & b \\ c & d \end{pmatrix} \begin{bmatrix} x \\ y \end{bmatrix} = \begin{bmatrix} ax + by \\ cx + dy \end{bmatrix}$. Under this action, $P$ is the stabilizer of $\begin{bmatrix} 1 \\ 0 \end{bmatrix}$, so $\SL{2}(F(t))/P = \PP^1(F(t))$. Since $\Gamma_F$ acts transitively on $\PP^1(F(t))$, $\Gamma_F \backslash \SL{2}(F(t))/P$ is a single point.
\end{proof}

Let $\beta_\infty$ be the Busemann function in $T_\infty$ for $\ell_\infty\vert_{[0,\infty)}$ and $\beta_0$ the Busemann function in $T_0$ for $\ell_0\vert_{[0,\infty)}$

Let $\rho:{[0,\infty)} \to X$ be the geodesic ray with $\rho(s) = (\ell_\infty(s), \ell_0(s))$. Let $\beta_\rho$ be the Busemann function for $\rho$ with $\beta_\rho(x_0) = 0$ such that $\beta_\rho(x,y) = \beta_\infty(x) + \beta_0(y)$ for any $x \in T_\infty$ and $y \in T_0$. Thus $\beta_\rho^{-1}([d,\infty))$ is a horoball based at $\rho(\infty)$.

\begin{lemma}\label{psstabilizer}
$P_{\Gamma_F}$ stabilizes the horosphere $\beta_\rho^{-1}(r)$ for every $r \in \RR$.
\end{lemma}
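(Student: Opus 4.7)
The plan is to write each element of $P_{\Gamma_F}$ as a diagonal matrix times an upper unipotent matrix and to check that each factor preserves $\beta_\rho$. Any $g \in P_{\Gamma_F}$ has the form $\begin{pmatrix} a & b \\ 0 & a^{-1} \end{pmatrix}$ with $a \in F[t,t^{-1}]^\times$ and $b \in F[t,t^{-1}]$; since $a$ is a unit in $F[t,t^{-1}]$, $a^{-1} \in F[t,t^{-1}]$, so we have the factorization
\[
\begin{pmatrix} a & b \\ 0 & a^{-1} \end{pmatrix} = \begin{pmatrix} a & 0 \\ 0 & a^{-1} \end{pmatrix} \begin{pmatrix} 1 & a^{-1}b \\ 0 & 1 \end{pmatrix}
\]
with both factors in $P_{\Gamma_F}$. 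It therefore suffices to treat $A_{\Gamma_F}$ and $U_{\Gamma_F}$ separately.

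For the unipotent case, $u = \begin{pmatrix} 1 & x \\ 0 & 1 \end{pmatrix}$ sits in the unipotent radical of the Borel of $\SL{2}(F((t^{-1})))$ stabilizing the end $\ell_\infty(\infty)$ of $T_\infty$. A standard fact from Bruhat--Tits theory says that such elements have zero translation length and fix that end, and hence preserve the Busemann function pointwise. The identical argument in $\SL{2}(F((t)))$ shows that $u$ preserves $\beta_0$, so $\beta_\rho(u \cdot z) = \beta_\rho(z)$ for every $z \in X$.

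For the diagonal case, $h = \op{diag}(a, a^{-1})$ with $a \in F(t)^\times$: tracking the image of the standard lattice $F[[t^{-1}]]^2$ under $h$ and normalizing up to homothety shows that $h \cdot \ell_\infty(0) = \ell_\infty(-2\nu_\infty(a))$, where the parameterization of $\ell_\infty$ is chosen so that $\ell_\infty(\infty)$ is the $P$-fixed end (as is implicit in the claim that $\beta_\rho^{-1}([d,\infty))$ is a horoball around $\rho(\infty)$). Since $h$ fixes the end $\rho(\infty)$, the Busemann cocycle extends this displacement off the axis to
\[
\beta_\rho(h \cdot z) - \beta_\rho(z) = -2\bigl(\nu_\infty(a) + \nu_0(a)\bigr)
\]
for every $z \in X$. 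The key arithmetic input: for $a \in F[t,t^{-1}]^\times$, writing $a = c t^n$ with $c \in F^\times$ and $n \in \ZZ$ gives $\nu_\infty(a) = -n$ and $\nu_0(a) = n$, so the right-hand side vanishes. Combined with the unipotent case, this shows $g$ preserves $\beta_\rho$ pointwise and hence stabilizes every horosphere $\beta_\rho^{-1}(r)$.

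The main obstacle is bookkeeping the sign conventions: verifying that $\ell_\infty$ and $\ell_0$ orient the $P$-fixed ends at $+\infty$, and pinning down the direction in which $\op{diag}(a, a^{-1})$ translates each axis. Once the conventions are consistent, the whole proof collapses to the identity $\nu_\infty(c t^n) + \nu_0(c t^n) = 0$, which is precisely the arithmetic feature of Laurent-polynomial units responsible for horoball stabilization.
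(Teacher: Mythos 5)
Your proof is correct and follows essentially the same route as the paper: decompose $P_{\Gamma_F}$ as the product of diagonal and unipotent parts, observe that the unipotent part preserves both Busemann functions since it fixes the relevant ends, and then use the fact that units of $F[t,t^{-1}]$ have the form $ct^n$ to see that the diagonal part's displacements along $T_\infty$ and $T_0$ cancel. The only cosmetic difference is that the paper handles the diagonal case by writing $A_{\Gamma_F} = A_F D^{\ZZ}$ and checking the generator $D$ directly (with $A_F$ acting trivially), whereas you phrase the same computation uniformly as the vanishing of $\nu_\infty(a) + \nu_0(a)$ for $a \in F[t,t^{-1}]^\times$; both amount to the identical arithmetic fact.
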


\begin{proof}

Since $P_{\Gamma_F} = A_{\Gamma_F} U_{\Gamma_F}$, it suffices to show that each of these groups preserves $\beta_\rho^{-1}(r)$.

Since $\beta_\infty$ and $\beta_0$ are $U_{\Gamma_F}$-equivariant, so is $\beta_\rho$. Thus $U_{\Gamma_F}$ preserves $\beta_\rho^{-1}(r)$.

Let 

\begin{equation*}
A_F = \left\{\left.\begin{pmatrix}a & 0 \\ 0 & a^{-1}\end{pmatrix}\right\vert a \in F \right\}
\end{equation*}

and $D = \begin{pmatrix} t & 0 \\0 & t^{-1} \end{pmatrix}$. Then $A_{\Gamma_F} = A_F D^\ZZ$

For any $x \in T_\infty, y \in T_0$ we have $\beta_\infty(Dx) = \beta_\infty(x) + 2$ and $\beta_0(Dy) = \beta_0(y) - 2$. Therefore $\beta_\rho(D(x,y)) = \beta_\rho(x,y)$ and $D$ preserves $\beta_\rho^{-1}(r)$.

Since $A_F$ acts trivially on $X$, it preserves $\beta_\rho^{-1}(r)$.

\end{proof}

Note that this also implies that the horoball $\beta_\rho^{-1}([d,\infty))$ is stabilized by $P_{\Gamma_F}$, since it is a union of horospheres.

The following lemma will be useful for describing the action of $U$ on the boundaries $\partial T_\infty$ and $\partial T_0$.

\begin{lemma}\label{opensetsintersect}
For any open sets $V \subseteq F((t^{-1}))$ and $W \subseteq F((t))$, we have 
\begin{equation*}
V \cap W \cap F(t) \neq \emptyset
\end{equation*}
\end{lemma}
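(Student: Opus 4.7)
The plan is to reduce the statement to a two-place weak approximation claim for $F(t)$ at the places $0$ and $\infty$, and then write down an explicit rational function realizing the approximation.

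Since $F((t^{-1}))$ and $F((t))$ are, by construction, the completions of $F(t)$ with respect to $\nu_\infty$ and $\nu_0$ respectively, $F(t)$ is dense in each. I would first use this density to pick $a \in V \cap F(t)$ and $b \in W \cap F(t)$, and then shrink $V$ and $W$ to basic open balls about these points. That is, without loss of generality we may assume
\[
V = \{x \in F((t^{-1})) : \nu_\infty(x - a) \geq N\} \quad \text{and} \quad W = \{y \in F((t)) : \nu_0(y - b) \geq M\}
\]
for some integers $N$ and $M$. The lemma is then reduced to producing a single $g \in F(t)$ with $\nu_\infty(g - a) \geq N$ and $\nu_0(g - b) \geq M$ simultaneously.

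For this I would take
\[
g = a + (b - a) \cdot \frac{1}{1 + t^n} \in F(t)
\]
for $n$ to be chosen large. The idea is that $1/(1+t^n)$ is close to $1$ at $t = 0$ and close to $0$ at $t = \infty$, so $g$ is close to $b$ near $0$ and close to $a$ near $\infty$. Making this precise, $\nu_\infty(1+t^n) = -n$ and $\nu_0(1+t^n) = 0$, so a direct computation yields $g - a = (b-a)/(1+t^n)$ and $g - b = (a-b) \cdot t^n/(1+t^n)$, giving
\[
\nu_\infty(g - a) = \nu_\infty(b - a) + n \quad \text{and} \quad \nu_0(g - b) = \nu_0(a - b) + n.
\]
Choosing $n$ large enough in terms of $N$, $M$, and the valuations of $b - a$ forces both sides above the required thresholds, placing $g$ in $V \cap W \cap F(t)$.

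I do not anticipate a serious obstacle: the only step requiring any thought is the choice of interpolating function, and the rest is routine bookkeeping with valuations. Alternatively one could cite the general weak approximation theorem for distinct places of a global field, but the explicit formula keeps the argument short and self-contained.
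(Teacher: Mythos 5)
Your proof is correct and uses essentially the same idea as the paper: both interpolate between a point of $V$ and a point of $W$ via a rational function that tends to $1$ at one place and to $0$ at the other (your $1/(1+t^n)$ is just $1 - t^n/(t^n+1)$, the complement of the paper's $\alpha_n$, and the resulting formula for $g$ agrees with the paper's $\beta_n$). You are slightly more careful than the paper in one respect: you explicitly invoke density of $F(t)$ to first replace $v \in V$ and $w \in W$ by elements $a, b \in F(t)$, which is needed for the difference $b-a$ (and hence $g$) to lie in $F(t)$; the paper leaves this step implicit.
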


\begin{proof}
Consider the sequence $\alpha_i = \frac{t^i}{t^i + 1}$, which converges to 1 in $F((t^{-1}))$ and to 0 in $F((t))$. Let $v \in V$ and $w \in W$. Then the sequence $\beta_i = (v-w)\alpha_i + w$ converges to $v$ in $F((t^{-1}))$ and to $w$ in $F((t))$. Therefore $\beta_i \in V \cap W \cap F(t)$ for any sufficiently large $i$.
\end{proof}

\begin{lemma}\label{sigmafundamental}
$\Sigma$ is a fundamental domain for the action of $U$ on $X$. 
\end{lemma}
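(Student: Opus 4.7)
The plan is to establish two properties: first, that $X = U \cdot \Sigma$, so every $U$-orbit meets $\Sigma$; and second, that distinct points of $\Sigma$ lie in distinct $U$-orbits.

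The second property follows quickly from Busemann invariance. Any $u = \begin{pmatrix} 1 & r \\ 0 & 1 \end{pmatrix} \in U$ sits diagonally inside $U(F((t^{-1}))) \times U(F((t)))$, and each factor is contained in the stabilizer of the end $\ell_\infty(\infty)$ (respectively $\ell_0(\infty)$). Consequently $u$ preserves $\beta_\infty$ on the first coordinate and $\beta_0$ on the second. If $p_1, p_2 \in \Sigma$ satisfy $u p_1 = p_2$, then their coordinates have matching Busemann values, and since $L_\infty$ (resp. $L_0$) meets each Busemann level set in a single point, $p_1 = p_2$.

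Property (1) is the main content. Given $(x,y) \in X$, I want to produce $r \in F(t)$ such that $u = \begin{pmatrix} 1 & r \\ 0 & 1 \end{pmatrix}$ simultaneously satisfies $u \cdot x \in L_\infty$ and $u \cdot y \in L_0$. First I would analyze each factor individually. On the $T_\infty$ side, the Iwasawa decomposition $\SL{2}(F((t^{-1}))) = U(F((t^{-1}))) \cdot A(F((t^{-1}))) \cdot \SL{2}(F[[t^{-1}]])$, combined with the fact that $A(F((t^{-1})))$ stabilizes $L_\infty$, produces some $a_\infty \in F((t^{-1}))$ with $(1, a_\infty; 0, 1) \cdot x \in L_\infty$. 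The set
\[
V_\infty = \{a \in F((t^{-1})) : (1, a; 0, 1) \cdot x \in L_\infty\}
\]
is then a coset of the stabilizer of $x$ in $U(F((t^{-1})))$, and this stabilizer is an open subgroup of the additive group $F((t^{-1}))$ — explicitly a ball of the form $t^m F[[t^{-1}]]$, since vertex stabilizers in $\SL{2}(F((t^{-1})))$ are open. Therefore $V_\infty$ is a nonempty open subset of $F((t^{-1}))$. The parallel construction yields a nonempty open $V_0 \subseteq F((t))$. Applying Lemma \ref{opensetsintersect} to $V_\infty$ and $V_0$ produces $r \in F(t) \cap V_\infty \cap V_0$, and the corresponding $u \in U$ carries $(x,y)$ into $\Sigma$.

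The main obstacle is verifying the openness of $V_\infty$ (and similarly $V_0$), which rests on the explicit description of vertex stabilizers in the unipotent subgroup of the local $\SL{2}$. This is standard but requires some bookkeeping with the Iwasawa decomposition and the action of the diagonal element $D$ on $L_\infty$. A secondary point is that if $x$ or $y$ lies in the interior of an edge rather than at a vertex, the same openness argument still applies, because the stabilizer of any point of the tree in the local unipotent group remains open.
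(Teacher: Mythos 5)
Your proof takes essentially the same route as the paper's: in both, one identifies an open subset of $F((t^{-1}))$ and an open subset of $F((t))$ of unipotent parameters that send the given point into the respective factor of $\Sigma$, invokes Lemma~\ref{opensetsintersect} to extract a common element of $F(t)$, and deduces uniqueness of the orbit representative from $U$-invariance of the Busemann functions. You supply slightly more detail than the paper does for why those parameter sets are nonempty (Iwasawa decomposition) and open (cosets of open vertex or point stabilizers in the local unipotent subgroup), but the argument is the same.
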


\begin{proof}

Let $x \in T_\infty$, $y \in T_0$. Then the set 
Ä
\begin{equation*}
V = \left\{ \alpha \in F((t^{-1})) \left\vert \begin{pmatrix} 1 & \alpha \\ 0 & 1 \end{pmatrix} \ell_\infty \text{ passes through } x\right.\right\}
\end{equation*}

is open in $F((t^{-1}))$. Similarly, 

\begin{equation*}
W = \left\{ \beta \in F((t)) \left\vert \begin{pmatrix} 1 & \beta \\ 0 & 1 \end{pmatrix} \ell_0 \text{ passes through } y\right.\right\}
\end{equation*}
is open in $F((t))$.

By lemma \ref{opensetsintersect}, there is some $\gamma \in V \cap W \cap F(t)$. Since $(x,y) \in \begin{pmatrix} 1 & \gamma \\ 0 & 1 \end{pmatrix} \Sigma$, we have $U\Sigma = X$.

To see that $\Sigma$ contains only one point from each $U$-orbit, observe that a point $x \in \Sigma$ is uniquely determined by the values $\beta_\infty(x)$ and $\beta_0(x)$. Since $U$ preserves $\beta_\infty$ and $\beta_0$, $Ux \cap \Sigma = x$.

\end{proof}

Let $x_n = (\ell_\infty(n), \ell_0(n))$.

\begin{lemma}\label{orbithoroball}
There is some $s>0$ such that for every $R > 0$ 
\begin{equation*}
\beta_\rho^{-1}(R) \subset \op{Nbhd}_{s}(\Gamma_F x_m)
\end{equation*}
where $m$ is an integer such that $\vert R-m \vert \leq \frac{1}{2}$.
\end{lemma}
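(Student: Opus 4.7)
The plan is to show every point on $\beta_\rho^{-1}(R)$ admits a $\Gamma_F$-translate within a uniformly bounded distance of $x_m$. The proof combines three ingredients from the previous lemmas: the $U$-retraction of $X$ onto $\Sigma$ (Lemma~\ref{sigmafundamental}), translation within $\Sigma$ by $D \in P_{\Gamma_F}$, which preserves horospheres by Lemma~\ref{psstabilizer}, and the decomposition $\SL{2}(F(t)) = \Gamma_F P$ from Lemma~\ref{doublecosets}.

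Given $y \in \beta_\rho^{-1}(R)$, Lemma~\ref{sigmafundamental} yields $u \in U$ with $z := u^{-1}y \in \Sigma$. Because $U$ fixes the end $\rho(\infty)$, it preserves $\beta_\rho$, and so $z$ lies on the line $\Sigma \cap \beta_\rho^{-1}(R) = \{(\ell_\infty(a), \ell_0(b)) : a+b = R\}$ inside the apartment. The element $D$ acts on this line by translating the $\ell_\infty$- and $\ell_0$-coordinates in opposite directions by fixed amounts, so the $D^{\ZZ}$-orbit is uniformly coarsely dense along the line; hence there is some $k \in \ZZ$ with $D^k z$ within a uniformly bounded distance of $x_m$. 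The hypothesis $|R-m| \leq \tfrac{1}{2}$ is what controls the residual gap between the horospherical level of $x_m$ and $\beta_\rho^{-1}(R)$.

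Next, apply Lemma~\ref{doublecosets} to decompose $u = \gamma p$ with $\gamma \in \Gamma_F$ and $p \in P$, so that $\gamma^{-1}y = pz$. Since the decomposition is only determined up to right multiplication by $P_{\Gamma_F}$, I absorb the factor $D^k \in P_{\Gamma_F}$ into $\gamma$, replacing $\gamma$ by $\gamma D^{-k}$. This leaves a residual $p$ acting on $z$ in a controlled way, and ideally producing $pz$ close to $D^k z$, and therefore close to $x_m$. Combining with Step 2 gives $d(y, \gamma x_m) = d(pz, x_m) \leq s$ for a uniform constant $s$.

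The main obstacle is controlling the residual $p$-factor after the $D^k$-absorption — equivalently, showing that $P_{\Gamma_F}$ acts on each horosphere $\beta_\rho^{-1}(R)$ with a fundamental domain of uniformly bounded diameter. Since the $D^{\ZZ}$-subgroup already handles translations along the $\Sigma$-direction of the horosphere, what remains is to show that $U_{\Gamma_F}$ acts transitively (or at least coarsely so) on the horocyclic fibers of the $U$-retraction $X \to \Sigma$. This transitivity should follow from the density of $F[t, t^{-1}]$ in both $F((t^{-1}))$ and $F((t))$, via an argument parallel to Lemma~\ref{opensetsintersect} applied to the stabilizers of horocycle vertices in $U(F((t^{-1})))$ and $U(F((t)))$.
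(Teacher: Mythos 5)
Your outline gets the first two ingredients right (retract to $\Sigma$ via Lemma~\ref{sigmafundamental}, then translate along the horosphere by $D^\ZZ \subset A_{\Gamma_F}$), and these match the paper's proof. But the third ingredient is where the argument breaks down, and the gap you flag at the end is the real content of the lemma.

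The appeal to Lemma~\ref{doublecosets} is vacuous here: you are decomposing an element $u \in U$, but $U < P$, so $u = 1 \cdot u$ already realizes the $\Gamma_F P$ decomposition with $\gamma = 1$. The decomposition produces no nontrivial $\Gamma_F$-element, and absorbing $D^k$ into $\gamma$ just relocates the problem to the residual $p = D^{-k}u$, which is an arbitrary element of $P$ and which you have no control over. Put differently, the statement ``$P_{\Gamma_F}$ acts on each horosphere with a fundamental domain of uniformly bounded diameter'' is \emph{equivalent} to the lemma you are trying to prove, so you cannot assume it.

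What the paper does instead is replace the $\Gamma_F P$ decomposition with a decomposition of $U$ itself: it exhibits a bounded subgroup
\[
C = \left\{\left.\left(\begin{pmatrix}1 & a \\ 0 & 1\end{pmatrix},\begin{pmatrix}1 & b \\ 0 & 1\end{pmatrix}\right)\right\vert a \in F[[t^{-1}]],\ b \in F[[t]]\right\}
\]
of $\SL{2}(F((t^{-1}))) \times \SL{2}(F((t)))$ and observes $U \subset C\, U_\Gamma$, from which $Ux \subset \op{Nbhd}_r(U_\Gamma x)$ uniformly in $x$. Note this is exactly the ``coarse transitivity of $U_{\Gamma_F}$ on the horocyclic fibers'' your last paragraph asks for. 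One caution about your proposed route to it: $F[t,t^{-1}]$ is \emph{not} dense in $F((t^{-1})) \times F((t))$ diagonally (to approximate $(v,w)$ to precision $L>0$ you would need the coefficients of $t^i$ for $|i|<L$ to satisfy $v_i = w_i$), so an argument literally ``parallel to Lemma~\ref{opensetsintersect}'' will not produce density. What is true, and suffices, is the coarser statement at precision $L=0$: for any $x \in F(t)$ there is $u \in F[t,t^{-1}]$ with $v_\infty(x-u) \geq 0$ and $v_0(x-u) \geq 0$, obtained by taking $u$ to match the finitely many positive-degree coefficients of $x$ in $F((t^{-1}))$ and the finitely many negative-degree coefficients in $F((t))$. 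That bounded-approximation statement, not density, is the missing lemma.
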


\begin{proof}

Let $y = (\ell_\infty(R), \ell_0(R)) = \beta_\rho^{-1}(R) \cap \rho$ and $D = \begin{pmatrix} t & 0 \\ 0 & t^{-1} \end{pmatrix}$.

Since 

\begin{equation*}
\beta_\rho^{-1}(R) \cap \Sigma = \left\{(\ell_\infty(x), \ell_0(R-x)) \left\vert x \in \RR\right.\right\}
\end{equation*}
and
\begin{equation*}
D^n \cdot (\ell_\infty(a), \ell_0(b)) =  (\ell_\infty(a+2n), \ell_0(b-2n))
\end{equation*}
 we see that 
\begin{equation*}
\beta_\rho^{-1}(R) \cap \Sigma \subset
\op{Nbhd}_{2\sqrt{2}}\left(A_\Gamma \cdot y \right)
\end{equation*}

Since $X = U \Sigma$ and $U$ preserves $\beta_\rho^{-1}(R)$,
\begin{equation*}
\beta_\rho^{-1}(R) \subset \op{Nbhd}_{2\sqrt{2}} \left(U A_\Gamma \cdot y \right)
\end{equation*}

Let $m \in \ZZ$ be such that $|R-m| \leq \frac{1}{2}$.
Since $d(x_m, y) < \frac{\sqrt{2}}{2}$, we have
\begin{equation*}
\beta_\rho^{-1}(R) \subset \op{Nbhd}_{2\sqrt{2} + \frac{\sqrt{2}}{2}}(U A_\Gamma \cdot x_m)
\end{equation*}

Let 
\begin{equation*}
C = \left\{\left. \left( \begin{pmatrix} 1 & a \\ 0 & 1\end{pmatrix}, \begin{pmatrix} 1 & b \\ 0 & 1\end{pmatrix}\right) \right\vert a \in F[[t^{-1}]], b \in F[[t]]\right\}
\end{equation*}

Then $C \subset \SL{2}(F((t^{-1}))) \times \SL{2}(F((t)))$ is bounded with $U \subset C U_\Gamma$. 
Thus, there is some $r \in \RR$ such that $Ux \subset \op{Nbhd}_r (U_\Gamma x)$ for every $x \in X$, which yields
\begin{equation*}
\beta_\rho^{-1}(R) \subset \op{Nbhd}_{2\sqrt{2} + \frac{\sqrt{2}}{2} + r}(U_\Gamma A_\Gamma \cdot x_m)
\end{equation*}

Since $U_\Gamma A_\Gamma < \Gamma_F$, this gives the desired inclusion

\begin{equation*}
\beta_\rho^{-1}(R) \subset \op{Nbhd}_{2\sqrt{2} + \frac{\sqrt{2}}{2} + r}(\Gamma_F \cdot x_m)
\end{equation*}

\end{proof}

\begin{lemma}\label{unboundedsequence}
For every $C>0$, there is some $N \in \NN$ such that $d(x_n, \Gamma_F x_0)>C$ for every $n > N$.
\end{lemma}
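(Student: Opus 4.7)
The plan is to prove the strengthened statement that $\beta_\rho(\gamma x_0) \leq 0$ for every $\gamma \in \Gamma_F$. Since $\beta_\rho$ is Lipschitz on $X$ (with constant $\sqrt 2$ for the $L^2$ product metric, as $\beta_\rho = \beta_\infty + \beta_0$ is a sum of $1$-Lipschitz functions on the factors) and $\beta_\rho(x_n) = 2n$, this immediately yields
\[
d(x_n, \gamma x_0) \;\geq\; \frac{1}{\sqrt 2}\bigl(2n - \beta_\rho(\gamma x_0)\bigr) \;\geq\; n\sqrt 2
\]
for every $\gamma \in \Gamma_F$, so given $C > 0$ it suffices to take $N = \lceil C/\sqrt 2\rceil$.

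The core computation is a standard Iwasawa decomposition. For $\gamma = \begin{pmatrix} a & b \\ c & d \end{pmatrix} \in \Gamma_F$, I would write the image of $\gamma$ in $\SL{2}(F((t^{-1})))$ as $\gamma = u_\infty \begin{pmatrix}\alpha_\infty & 0 \\ 0 & \alpha_\infty^{-1}\end{pmatrix} k_\infty$ with $u_\infty \in U$ and $k_\infty \in \SL{2}(F[[t^{-1}]])$. Since $k_\infty$ fixes $\ell_\infty(0)$ and $u_\infty$ fixes the end $\rho(\infty)$ of $T_\infty$ (and hence preserves $\beta_\infty$), we obtain $\beta_\infty(\gamma \ell_\infty(0)) = -2\nu_\infty(\alpha_\infty)$. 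Equating the bottom rows of $\gamma$ and $\begin{pmatrix}\alpha_\infty & 0 \\ 0 & \alpha_\infty^{-1}\end{pmatrix} k_\infty$, and using that the rows of $\SL{2}(F[[t^{-1}]])$-matrices are primitive, gives $\min(\nu_\infty(c), \nu_\infty(d)) = -\nu_\infty(\alpha_\infty)$. The analogous argument in $\SL{2}(F((t)))$ then yields the formula
\[
\beta_\rho(\gamma x_0) = 2\min(\nu_\infty(c), \nu_\infty(d)) + 2\min(\nu_0(c), \nu_0(d)).
\]

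Bounding this sum by $0$ is a short case analysis. For any nonzero $f = \sum_{i=M}^N a_i t^i \in F[t, t^{-1}]$ with $a_M, a_N \neq 0$, one has $\nu_\infty(f) = -N$ and $\nu_0(f) = M$, so $\nu_\infty(f) + \nu_0(f) = M - N \leq 0$. When both minima in the formula above are achieved at the same entry (either both at $c$ or both at $d$), applying this inequality to that entry gives the bound. In the mixed case, say $\nu_\infty(c) \leq \nu_\infty(d)$ and $\nu_0(d) < \nu_0(c)$, we chain
\[
\nu_\infty(c) + \nu_0(d) \;\leq\; \nu_\infty(c) + \nu_0(c) \;\leq\; 0,
\]
with the other mixed case handled symmetrically. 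The main obstacle is pinning down the Iwasawa-based formula for $\beta_\infty(\gamma \ell_\infty(0))$ crisply; once it is established, the Lipschitz estimate and the case analysis are routine.
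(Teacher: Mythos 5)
Your proof is correct, and it takes a genuinely different route from the paper. The paper argues by contradiction: it reduces the lemma to showing that $\{\Gamma_F\delta^n\}$ is unbounded in $\Gamma_F\backslash(\SL{2}(F(t))\times\SL{2}(F(t)))$, then shows that a bounded subsequence would force a matrix $M_i\in\Gamma_F$ to have $v_\infty(a_n), v_0(a_n)\geq 1$ and likewise for $c_n$, forcing $a_n=c_n=0$, which is impossible. You instead prove the sharper quantitative statement $\beta_\rho(\gamma x_0)\leq 0$ for all $\gamma\in\Gamma_F$, via the Iwasawa decomposition $\gamma = u\,a(\alpha)\,k$ in each factor, which (using that $u$ preserves $\beta$, that $k$ fixes the base vertex, and that the bottom row of $k$ is primitive) yields the clean formula $\beta_\rho(\gamma x_0) = 2\min(\nu_\infty(c),\nu_\infty(d)) + 2\min(\nu_0(c),\nu_0(d))$, and then a short valuation case analysis on Laurent polynomials (noting $(c,d)\neq(0,0)$) bounds this by $0$. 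Both proofs ultimately exploit the same tension between $\nu_\infty$ and $\nu_0$ of the bottom-row entries, but yours packages it as an exact Busemann calculation rather than a boundedness contradiction. The payoff is that your version gives an explicit $N=\lceil C/\sqrt 2\rceil$ and, more significantly, directly yields Lemma \ref{farawayhoroballs} for any $R>0$ without needing Lemma \ref{orbithoroball} as an intermediary, since the statement $\Gamma_F x_0\cap\beta_\rho^{-1}((0,\infty))=\emptyset$ is exactly what that lemma asks for. One small thing worth spelling out if you write this up in full: the identity $\min(\nu_\infty(k_{21}),\nu_\infty(k_{22}))=0$ for $k\in\SL{2}(F[[t^{-1}]])$ follows from $\det k = 1$ together with integrality of all entries, so ``primitive'' deserves a one-line justification.
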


\begin{proof}
The following proof is based on the proof of Lemma 2.2 by Bux-Wortman in \cite{BuxWortman2006}.

Let $C > 0$. 

We will show that any subsequence of $\{x_{2n}\}$ is unbounded in the quotient space $\Gamma_F \backslash X$. This implies that only a finite number of points of $\{x_{2n}\}$ can be contained in any neighborhood of $\Gamma_F x_0$. Since a finite number of points of $\{x_{2n}\}$ are contained in $\operatorname{Nbhd}_{C+\sqrt{2}}(\Gamma_F x_0)$ and $d(x_{2n}, x_{2n+1}) = \sqrt{2}$, a finite number of points of $\{x_{2n+1}\}$ are contained in $\operatorname{Nbhd}_C(\Gamma_F x_0)$. This suffices to prove the lemma.

The group $\SL{2}(F(t)) \times \SL{2}(F(t))$ acts componentwise on $X$, and has a metric induced by the valuations $v_\infty$ and $v_0$. Under this metric, the stabilizer of $x_0$ is a bounded subgroup. Thus, to prove that a set of vertices in $\Gamma_F \backslash X$ is not bounded, it suffices to prove that it has unbounded preimage under the projection
\begin{equation*}
\Gamma_F \backslash \left(\SL{2}(F(t)) \times \SL{2}(F(t))\right) \to \Gamma_F \backslash X
\end{equation*}
given by $\Gamma_F g \mapsto \Gamma_F g x_0$.

Let $D = \begin{pmatrix} t & 0 \\ 0 & t^{-1} \end{pmatrix}$ and \mbox{$\delta = (D, D^{-1}) \in \SL{2}(F(t)) \times \SL{2}(F(t))$.} Then \mbox{$A^n x_0 = x_{2n}$.} It therefore suffices to prove that any infinite subset of $\{\Gamma_F \delta^n\}_{n \in \NN}$ is unbounded in $\Gamma_F \backslash \left(\SL{2}(F(t)) \times \SL{2}(F(t))\right)$.

Assume that this is not the case. That is, assume that there is some infinite subset $I \subset \NN$ such that $\{\Gamma_F \delta^i\}_{i \in I}$ is bounded. Then there is some $L$ such that for every $i \in I$ there is some $M_i = \begin{pmatrix} a_i & b_i \\ c_i & d_i \end{pmatrix} \in \Gamma_F$ such that the values of $v_\infty$ of the coefficients of $M_iD^i$ are bounded from below by $L$ and the values of $v_0$ of the coefficients of $M_iD^{-i}$ are also bounded from below by $L$. Then
\begin{equation*}
L \leq v_\infty(a_nt^n) = v_\infty(a_n) + n v_\infty (t) = v_\infty(a_n)-n
\end{equation*}
and
\begin{equation*}
L \leq v_0(a_nt^{-n}) = v_0(a_n) - n v_0 (t) = v_0(a_n)-n
\end{equation*}
This gives that $v_\infty(a_n) \geq 1$ and $v_0(a_n) \geq 1$ whenever $n \geq 1-L$, which implies that $a_n = 0$.

The same argument shows that $c_n = 0$, which implies that $M_i$ is not in $\Gamma_F$ when $i \geq 1-L$.
\end{proof}

\begin{lemma}\label{farawayhoroballs}
There is some $R>0$ such that $\Gamma_F \cdot x_0 \cap \beta_\rho^{-1}([R,\infty)) = \emptyset$.
\end{lemma}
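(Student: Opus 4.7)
The plan is to derive Lemma \ref{farawayhoroballs} by combining the two immediately preceding results. Lemma \ref{orbithoroball} says that every point on a horosphere at height $R$ lies within a uniform distance $s$ of the $\Gamma_F$-orbit of $x_m$, where $m$ is the integer closest to $R$. Lemma \ref{unboundedsequence} says that the sequence $\{x_n\}$ eventually leaves every bounded neighborhood of $\Gamma_F x_0$. Informally, if $\Gamma_F x_0$ reached arbitrarily deep into the horoball $\beta_\rho^{-1}([R,\infty))$, then it would have to come close to $x_m$ for arbitrarily large $m$, and (by $\Gamma_F$-invariance of the metric) $x_m$ would then be close to $\Gamma_F x_0$ for arbitrarily large $m$, contradicting the second lemma.

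Concretely, I would argue by contradiction. Let $s$ be the constant produced by Lemma \ref{orbithoroball}. Applying Lemma \ref{unboundedsequence} with $C = s$ yields some $N \in \NN$ such that $d(x_n, \Gamma_F x_0) > s$ for every $n > N$. Choose $R$ large enough that the nearest integer $m$ to any $R' \geq R$ satisfies $m > N$; taking $R = N + 1$ is more than sufficient. Now suppose some $\gamma \in \Gamma_F$ has $\gamma x_0 \in \beta_\rho^{-1}([R,\infty))$, and set $R' = \beta_\rho(\gamma x_0) \geq R$, with $m$ the nearest integer to $R'$.

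By Lemma \ref{orbithoroball}, $\gamma x_0 \in \beta_\rho^{-1}(R') \subset \operatorname{Nbhd}_s(\Gamma_F x_m)$, so there is some $\gamma' \in \Gamma_F$ with $d(\gamma x_0, \gamma' x_m) \leq s$. Because $\Gamma_F$ acts on $X$ by isometries, this rewrites as $d(x_m, (\gamma')^{-1}\gamma x_0) \leq s$, hence $d(x_m, \Gamma_F x_0) \leq s$. But $m > N$, contradicting the choice of $N$, which completes the argument.

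I do not expect any genuine obstacle here: essentially all of the work has been front-loaded into Lemmas \ref{orbithoroball} and \ref{unboundedsequence}, and what remains is a short combinatorial deduction. The only care needed is bookkeeping the constants so that the nearest integer $m$ to $R'$ is guaranteed to exceed $N$, and invoking $\Gamma_F$-invariance of the metric to convert the estimate $d(\gamma x_0, \gamma' x_m) \leq s$ into a statement about the distance from $x_m$ to the orbit $\Gamma_F x_0$.
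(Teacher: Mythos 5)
Your proof is correct and follows essentially the same route as the paper's: both combine Lemma \ref{orbithoroball} and Lemma \ref{unboundedsequence}, using $\Gamma_F$-invariance of the metric to conclude. You phrase it as a contradiction with explicit constant bookkeeping, whereas the paper argues directly, but the substance is identical.
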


\begin{proof}
Since $\beta_\rho^{-1}([R,\infty))$ is a union of the horospheres $\beta_\rho^{-1}(r)$ for $r \in [R,\infty)$, Lemma \ref{orbithoroball} guarantees that
 \begin{equation*}
 \beta_\rho^{-1}([R,\infty)) \subset \bigcup_{\left\{m \in \ZZ | m > R-\frac{1}{2}\right\}} \op{Nbhd}_{s}(\Gamma_F \cdot x_m)
 \end{equation*}
 for some $s$.
 
 By Lemma \ref{unboundedsequence}, $d(x_m, \Gamma_F \cdot x_0) > s$ for large values of $R$. This implies that 
 \begin{equation*}
 d(\Gamma_F \cdot x_m, \Gamma_F \cdot x_0) > s
 \end{equation*}
 
 Thus, we may conclude that 

\begin{equation*}
\Gamma_F \cdot x_0 \cap \beta_\rho^{-1}([R,\infty)) = \emptyset
\end{equation*}
\end{proof}

Fix $R$ as in Lemma \ref{farawayhoroballs}. Let $d$ be the maximum distance from a point in the horosphere $\beta_\rho^{-1}(R)$ to the orbit $\Gamma_F \cdot x_0$.

\begin{lemma}\label{disfinite}
$d$ is finite.
\end{lemma}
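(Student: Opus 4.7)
The plan is to deduce the statement directly from Lemma~\ref{orbithoroball}, which already packages the hard geometric content. That lemma provides a uniform constant $s$ and an integer $m$ with $|R-m| \leq \frac{1}{2}$ such that every point of $\beta_\rho^{-1}(R)$ lies within distance $s$ of the single orbit $\Gamma_F \cdot x_m$. So the only remaining task is to bound how far $\Gamma_F \cdot x_m$ sits from $\Gamma_F \cdot x_0$ in a way that does not depend on which point of the horosphere we start with.

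For this, I would use that $\Gamma_F$ acts on $X$ by isometries, so for any $\gamma \in \Gamma_F$ we have $d(\gamma x_m, \gamma x_0) = d(x_m, x_0)$. The points $x_0$ and $x_m$ both lie in the apartment $\Sigma$ on the ray $\rho$, and the metric on $X = T_\infty \times T_0$ is the product metric built from trees with edges of length $1$, giving
\[
d(x_m, x_0) = \sqrt{m^2 + m^2} = |m|\sqrt{2}.
\]

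Putting these together: for any $p \in \beta_\rho^{-1}(R)$, choose $\gamma \in \Gamma_F$ with $d(p, \gamma x_m) \leq s$. Then the triangle inequality gives
\[
d(p, \Gamma_F \cdot x_0) \leq d(p, \gamma x_0) \leq d(p, \gamma x_m) + d(\gamma x_m, \gamma x_0) \leq s + |m|\sqrt{2}.
\]
Since $m$ is a fixed integer determined (up to sign) by the condition $|R-m| \leq \frac{1}{2}$, this upper bound is independent of $p$, so $d \leq s + |m|\sqrt{2} < \infty$.

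I do not anticipate any real obstacle here; all the nontrivial geometry — the existence of $s$ via the decomposition $X = U\Sigma$, the shift action of $D$ along $\rho$, and the approximation of $U$ by $U_\Gamma$ — has already been absorbed into the earlier lemmas. The present statement is essentially a bookkeeping consequence of Lemma~\ref{orbithoroball} together with the fact that $\Gamma_F$ acts isometrically and $x_m$ is at finite distance from $x_0$.
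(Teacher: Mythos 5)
Your proof is correct and follows essentially the same approach as the paper: apply Lemma~\ref{orbithoroball} to get a uniform $s$-neighborhood of $\Gamma_F \cdot x_m$, then use that $\Gamma_F$ acts by isometries together with $d(x_m, x_0) = |m|\sqrt{2}$ and the triangle inequality to get a bound independent of the point on the horosphere.
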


\begin{proof}

By Lemma \ref{orbithoroball}, $\beta_\rho^{-1}(R) \subset \op{Nbhd}_s(\Gamma_F x_n)$ for some $n > 0$, $s>0$. Thus for every $x \in \beta_\rho^{-1}(R)$ there is some $\gamma \in \Gamma_F$ such that $d(x,\gamma \cdot x_n) < s$. Since 
\begin{equation*}
d(\gamma \cdot x_n, \gamma \cdot x_0) = d(x_n, x_0) = n\sqrt{2}
\end{equation*}
 we have $d(x,\gamma \cdot x_0) < s + n\sqrt{2}$. Since this bound is independent of $x$, it follows that $d < s + n\sqrt{2}$.
\end{proof}

Let $H=\beta_\rho^{-1}([R+d, \infty))$.

\begin{lemma}\label{nooverlap}
Let $\gamma \in \Gamma_F$. If $\gamma H \cap H \neq \emptyset$, then $\gamma H = H$ and $\gamma \in P_{\Gamma_F}$.
\end{lemma}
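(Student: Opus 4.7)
The plan is to reduce the two conclusions to a single statement about the boundary of $X$. First I would observe that $H=\beta_\rho^{-1}([R+d,\infty))$ is a union of the horospheres $\beta_\rho^{-1}(r)$ for $r\geq R+d$; since each of these is stabilized by $P_{\Gamma_F}$ by Lemma~\ref{psstabilizer}, the whole horoball $H$ is stabilized by $P_{\Gamma_F}$. Consequently, once one knows $\gamma\in P_{\Gamma_F}$, the equation $\gamma H=H$ comes for free, so the entire lemma is equivalent to showing that any $\gamma\in\Gamma_F$ with $\gamma H\cap H\neq\emptyset$ lies in $P_{\Gamma_F}$.

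Next I would identify $P_{\Gamma_F}$ as the stabilizer in $\Gamma_F$ of the boundary point $\rho(\infty)\in\partial T_\infty\times\partial T_0$. Indeed, in each factor, the upper-triangular Borel subgroup is exactly the stabilizer of the end containing $\ell_\infty(\infty)$ (respectively $\ell_0(\infty)$); intersecting with the diagonal copy of $\SL{2}(F(t))$ gives $P$, and then $P\cap\Gamma_F=P_{\Gamma_F}$. Therefore the task becomes: if $\gamma H\cap H\neq\emptyset$, then $\gamma\rho(\infty)=\rho(\infty)$. I would argue by contrapositive, assuming $\gamma\rho(\infty)\neq\rho(\infty)$ and producing a point $p\in H\cap\gamma H$ that contradicts the disjointness constraints already established. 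The translated horoball $\gamma H$ is then based at a different boundary point, and $\Gamma_F\cdot x_0\cap\gamma H=\gamma(\Gamma_F\cdot x_0\cap H)=\emptyset$ by Lemma~\ref{farawayhoroballs}, so $\Gamma_F\cdot x_0$ is disjoint from $H\cup\gamma H$.

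To exploit this I would pick a hypothetical $p\in H\cap\gamma H$, apply Lemma~\ref{orbithoroball} to $p\in H$ and to $\gamma^{-1}p\in H$, and obtain elements $\gamma_1,\gamma_2\in\Gamma_F$ and integers $m_1,m_2$ with $d(p,\gamma_1 x_{m_1})\leq s$ and $d(\gamma^{-1}p,\gamma_2 x_{m_2})\leq s$. The triangle inequality then gives $d(\gamma_1 x_{m_1},\gamma\gamma_2 x_{m_2})\leq 2s$, so the element $\gamma_1^{-1}\gamma\gamma_2$ moves $x_{m_2}$ to within $2s$ of $x_{m_1}$. Since $\gamma\notin P_{\Gamma_F}$, the Bruhat decomposition $\SL{2}(F(t))=P\sqcup PwP$ (combined with the single coset in Lemma~\ref{doublecosets}) forces this element to have a nontrivial lower-triangular part; this nontriviality should contradict the bounded displacement just obtained once the depth $\beta_\rho(p)\geq R+d$ is pushed together with the $\sqrt{2}$-growth of $d(x_m,x_0)$ and the estimate from Lemma~\ref{unboundedsequence}.

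The main obstacle I expect is genuinely geometric: in a product of two trees, two horoballs based at distinct points of the boundary can still meet in an unbounded set, so the conclusion $\gamma H\cap H=\emptyset$ is not automatic from $\gamma\rho(\infty)\neq\rho(\infty)$. The safety buffer of size $d$ built into the level $R+d$ of $H$ was chosen precisely to rule this out for $\Gamma_F$-translates, and carefully turning the inequality ``$\Gamma_F\cdot x_0$ misses $\beta_\rho^{-1}([R,\infty))$'' into ``$\gamma H$ misses $H$'' — via the transitivity statement of Lemma~\ref{doublecosets} identifying $\Gamma_F\cdot\rho(\infty)$ with $\mathbb{P}^1(F(t))$ — is the real work of the proof.
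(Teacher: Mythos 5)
Your reduction to showing $\gamma\in P_{\Gamma_F}$ is sound, and the identification of $P_{\Gamma_F}$ with $\op{Stab}_{\Gamma_F}(\rho(\infty))$ is correct (since $\Gamma_F$ acts diagonally and cannot exchange the $T_\infty$ and $T_0$ factors, fixing the midpoint $\rho(\infty)$ of the chamber is the same as fixing the chamber). But the heart of your argument --- the contradiction derived from a hypothetical $p\in H\cap\gamma H$ --- is never actually produced, and I do not think the sketched route closes. After applying Lemma~\ref{orbithoroball} twice and the triangle inequality, you obtain an element $\gamma_1^{-1}\gamma\gamma_2\in\Gamma_F$ that moves $x_{m_2}$ to within $2s$ of $x_{m_1}$. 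But Lemma~\ref{orbithoroball} gives you no control over which $\gamma_1,\gamma_2\in\Gamma_F$ you get, so nothing prevents $\gamma_1^{-1}\gamma\gamma_2$ from lying in $P_{\Gamma_F}$ even though $\gamma$ does not; and Lemma~\ref{unboundedsequence} controls $d(x_n,\Gamma_F x_0)$, not $d(x_{m_1},\Gamma_F x_{m_2})$ for two large indices, so the ``bounded displacement'' contradiction you are hoping for is not there. You correctly identify that the buffer $d$ is the key to the ``real work'' but then leave precisely that work undone; the last paragraph is an admission that the argument is incomplete, not a proof.

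The paper's proof instead stays inside the building. It places the two boundary chambers $C$ and $\gamma C$ into a single apartment $S$ of $\partial X$, observes (using that $\op{Stab}_{\Gamma_F}(\ell_\infty(\infty))=\op{Stab}_{\Gamma_F}(\ell_0(\infty))<P$, so an element of $\Gamma_F$ fixes both ends of $C$ or neither) that $C$ and $\gamma C$ cannot be adjacent, hence are equal or opposite. In the opposite case it takes a point $x\in S\cap\gamma H\cap\beta_\rho^{-1}(R)$: by the definition of $d$ there is $y\in\Gamma_F\cdot x_0$ with $d(x,y)\le d$, and since $\beta_{\gamma\rho}(x)\ge R+d$ and Busemann functions are $1$-Lipschitz, $\beta_{\gamma\rho}(y)\ge R$, so $\beta_\rho(\gamma^{-1}y)\ge R$ with $\gamma^{-1}y\in\Gamma_F\cdot x_0$, contradicting Lemma~\ref{farawayhoroballs}. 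That is where the buffer $d$ earns its keep, and it is exactly the step your sketch gestures at but does not carry out. You never use the equal/adjacent/opposite trichotomy for chambers of $\partial X$, never invoke the $1$-Lipschitz property of Busemann functions, and never bring Lemma~\ref{farawayhoroballs} to bear on the translated Busemann function $\beta_{\gamma\rho}$ --- these are the missing ideas.
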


\begin{proof}
Let $C$ be the chamber of  $\partial X$ containing the point $\rho(\infty)$ and $S \subset X$ an apartment whose boundary contains $C$ and $\gamma C$.
The endpoints of the arc $C$ are $\ell_\infty(\infty)$ and $\ell_0(\infty)$. Thus, any chamber adjacent to $C$ contains one of these two points. Because $\op{Stab}_{\Gamma_F}(\ell_\infty(\infty)) = \op{Stab}_{\Gamma_F}(\ell_0(\infty)) < P$, any element of $\Gamma_F$ either stabilizes both $\ell_\infty(\infty)$ and $\ell_0(\infty)$ or neither. Therefore, $C$ and $\gamma C$ cannot be adjacent. Since each apartment of $\partial X$ contains exactly four chambers, the two chambers are either equal or opposite.

If $C$ and $\gamma C$ are opposite, then $S \cap H \cap \gamma H$ is contained in a neighborhood of a hyperplane in $S$. Suppose $x \in S \cap \gamma H \cap \beta_\rho^{-1}(R)$. It follows from the choice of $d$ that there is some $y \in \Gamma_F \cdot x_0$ such that $d(x,y) \leq d$. Since $\beta_{\gamma\rho}(x) \geq R+d$, it is clear that $\beta_{\gamma\rho}(y) \geq R$. Therefore $\beta_\rho(\gamma^{-1}y) \geq R$, which contradicts Lemma \ref{farawayhoroballs}, since $\gamma^{-1}y \in \Gamma_F \cdot x_0$.

Thus $\gamma C = C$, which implies that $\gamma \in P_{\Gamma_F}$. It follows that $H$ and $\gamma H$ are horoballs based at the same boundary point. Since $\gamma$ preserves distance from $\Gamma_F \cdot x_0$, $\gamma H = H$.
\end{proof}

\section{The Classifying Space}

We will now construct a second space on which $\Gamma$ acts freely and properly discontinuously by isometries. A map between the two spaces will allow us to 
construct cohomology classes of $\Gamma$ in the familiar context of a product of trees.

Let $X_0$ be a discrete collection of points $\{x_\gamma \vert \gamma\in\Gamma\}$. $\Gamma$ acts freely on this set by $\gamma^\prime\cdot x_\gamma = x_{\gamma^\prime\gamma}$. Define a $\Gamma$-equivariant map $\psi_0: X_0 \to X$ by $\psi_0(x_\gamma) = \gamma\cdot x_0$. By Lemma \ref{farawayhoroballs}, $\psi_0(X_0) \cap H = \emptyset$. Note also that $\psi_0(X_0) \cap \gamma H = \emptyset$ for every $\gamma \in \Gamma$.

Construct $X_1$ from $X_0$ by attaching a 1-cell $E_{x_\gamma, x_{\gamma^\prime}}$ of length 1 between every pair of points $x_\gamma, x_{\gamma^\prime} \in X_0$. For $\zeta \in \Gamma$, let $\zeta: E_{x_\gamma,x_{\gamma^\prime}} \to E_{x_{\zeta\gamma},x_{\zeta \gamma^\prime}}$ be the unique distance-preserving map with $\zeta(x_\gamma) = x_{\zeta \gamma}$ and $\zeta(x_{\gamma^\prime}) = x_{\zeta\gamma^\prime}$. This defines a $\Gamma$-action on $X_1$ that extends the $\Gamma$-action on $X_0$.

We wish to define a $\Gamma$-equivariant map $\psi_1: X_1 \to X$ extending $\psi_0$ and with
\begin{equation*}
\psi_1(X_1) \subset X - \bigcup_{\gamma \in \Gamma} \gamma H
\end{equation*}

For every nonidentity element $\gamma \in \Gamma$, choose a path $c_\gamma$ from $x_0$ to $\gamma x_0$. Since $\partial (\gamma H)$ is connected for all $\gamma \in \Gamma$, we can choose all such paths to lie outside of $\bigcup\limits_{\gamma \in \Gamma} \gamma H$. We may then define $\psi_1(E_{x_\gamma, x_{\gamma^\prime}})$ to lie along the path $\gamma c_{(\gamma^{-1}\gamma^\prime)}$.

We will build a family of spaces $X_n$ inductively. Beginning with an $n$-dimensional, \mbox{$(n-1)$-connected} cell complex $X_{n}$ on which $\Gamma$ acts freely and a $\Gamma$-equivariant map $\psi_{n}: X_{n} \to X$, we may construct an $n+1$-dimensional, $n$-connected cell complex $X_{n+1}$ on which $\Gamma$ acts freely and a $\Gamma$-equivariant map $\psi_{n+1}:X_{n+1} \to X$. For every map $s: S^{n} \to X_{n}$, attach an $(n+1)$-cell $E_s$ along $\gamma s$ for every $\gamma \in \Gamma$. This results in an $n$-connected space $X_{n+1}$. Define a $\Gamma$-action on $X_{n+1}$ by $\gamma E_s = E_{\gamma s}$. For each $s$ as above, define $\psi_{n+1,s}: E_s \to X$ to extend $\psi_{n}(\partial E_s)$ with $\psi_{n+1,\gamma s} = \gamma\psi_{n+1,s}$. Because $\pi_{n+1}(X)$ is trivial for all $n$, this is always possible. There is a unique $\Gamma$-equivariant map $\psi_{n+1}$ extending these maps.

Since the focus of Theorem \ref{mainthm} is second-dimensional cohomology, we will use the space $X_3$ extensively. Therefore we will let $\psi=\psi_3$. Since $\Gamma \backslash X_3$ is the 3-skeleton of a $K(\Gamma,1)$, $H^2(\Gamma; F) = H^2(X_3; F)$.

\section{Local Cohomology}
Before defining cocyles on $\Gamma \setminus X_3$, we will define cocycles in the relative homology groups of several subspaces of $X$.

Recall that $x_n = (\ell_\infty(n), \ell_0(n))$. We will assume from now on that $n>R+d$ so that $x_n \in H$. Recall also
\begin{equation*}
U = \left\{\left.\begin{pmatrix}1 & x \\ 0 & 1\end{pmatrix}\right\vert x \in F(t)\right\}
\end{equation*}
and $U_\Gamma = U \cap \Gamma$. 
 
Let 
\begin{equation*}
U_n=\left\{\left. \begin{pmatrix}1 & x \\ 0 & 1\end{pmatrix}\right\vert v_\infty(x) \geq -n, v_0(x) \geq -n \right\}
\end{equation*}

and

\begin{equation*}
U^n=\left\{\left. \begin{pmatrix}1 & \sum_{i=k}^\ell a_i t^i \\ 0 & 1\end{pmatrix}\right\vert k,\ell \in \ZZ, a_i = 0 \text{ for } -n\leq i \leq n \right\}
\end{equation*}

Then $U_n \cap U^n = \operatorname{Id}$ and $U_\Gamma = U_n U^n$.
 
Let $S_n$ be the star of $x_n$ in $X$ (that is, the collection of cells having $x_n$ as a vertex) and $C$ the cell in $S_n$ containing $x_{n-1}$. Let $S_n^\downarrow = U_n C$. This is also the union of 2-cells in $S_n$ with $\beta_\infty$ and $\beta_0$ bounded above by $n$.

We will define cocycles $\varphi_n \in H^2(S_n^\downarrow, S_n^\downarrow\cap\partial S_n; F)$. Summing these over cosets of $U_\Gamma$ in $\Gamma$ will give us cocycles in $H^2\left(\Gamma; F \right)$, which we will use to prove Theorem \ref{mainthm}.

\begin{lemma}\label{actstransitively}
$\left\{\left.\begin{pmatrix} 1 & at^{-n} + bt^n \\ 0 & 1\end{pmatrix}\right\vert a, b \in F \right\} < U_n$ acts transitively on the set of 2-cells in $S_n^\downarrow$.
\end{lemma}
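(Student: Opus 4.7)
The plan is to show first that $V \subseteq U_n$ (so that $V$ stabilizes $x_n$ and permutes the 2-cells of $S_n^\downarrow$) and then to parameterize those 2-cells in a way that makes the $V$-action transparent. For $x = at^{-n} + bt^n$ one computes $v_\infty(x) \geq \min(n, -n) = -n$ and $v_0(x) \geq -n$ symmetrically, so indeed $V \subset U_n$ and each element fixes $x_n$. Each 2-cell of $S_n^\downarrow$ is a square $e_\infty \times e_0$, where $e_\infty$ is an edge at $\ell_\infty(n)$ in $T_\infty$ along which $\beta_\infty$ decreases and $e_0$ is a corresponding downward edge at $\ell_0(n)$ in $T_0$. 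Since $F$ is the residue field of $F((t^{-1}))$ at $v_\infty$, the link of $\ell_\infty(n)$ in $T_\infty$ is naturally $\PP^1(F)$, with the upward edge toward $\ell_\infty(n+1)$ corresponding to $\infty \in \PP^1(F)$ and the downward edges to $F$; an analogous statement holds for $\ell_0(n)$ in $T_0$. This identifies the set of 2-cells of $S_n^\downarrow$ with $F \times F$.

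The core step is to compute how $V$ acts on the two links. Conjugating $\begin{pmatrix} 1 & x \\ 0 & 1 \end{pmatrix}$ by $g = \begin{pmatrix} t^n & 0 \\ 0 & 1 \end{pmatrix}$, which sends $\ell_\infty(0) \mapsto \ell_\infty(n)$, yields $\begin{pmatrix} 1 & xt^{-n} \\ 0 & 1 \end{pmatrix}$. For $x = at^{-n} + bt^n$ one gets $xt^{-n} = at^{-2n} + b \in F[[t^{-1}]]$, which reduces modulo $t^{-1}$ to $\begin{pmatrix} 1 & b \\ 0 & 1 \end{pmatrix} \in \SL{2}(F)$; this acts on $\PP^1(F)$ by translation by $b$, fixing $\infty$ and permuting $F$ transitively. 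The parallel computation with $g = \begin{pmatrix} t^{-n} & 0 \\ 0 & 1 \end{pmatrix}$, which sends $\ell_0(0) \mapsto \ell_0(n)$, produces $\begin{pmatrix} 1 & xt^{n} \\ 0 & 1 \end{pmatrix}$ with $xt^n = a + bt^{2n} \in F[[t]]$, reducing modulo $t$ to $\begin{pmatrix} 1 & a \\ 0 & 1 \end{pmatrix}$, so $V$ acts on the $T_0$-link by translation by $a$.

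Since $(a,b)$ ranges independently over $F \times F$, the induced action on $F \times F$ is $(c_\infty, c_0) \mapsto (c_\infty + b, c_0 + a)$, which is transitive, giving the lemma. The main subtlety lies in the link-action computation: one must verify that the $at^{-n}$ term contributes trivially to the $T_\infty$-link reduction (only the coefficient of $t^n$ survives there) while the $bt^n$ term contributes trivially to the $T_0$-link reduction (only the coefficient of $t^{-n}$ survives there). This decoupling of the two parameters is precisely why the two-parameter family $V$ suffices to cover all $|F|^2$ downward 2-cells.
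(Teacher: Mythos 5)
Your proof is correct and follows essentially the same route as the paper: decompose each 2-cell of $S_n^\downarrow$ as a product of one downward edge at $\ell_\infty(n)$ and one at $\ell_0(n)$, and observe that the $b$-parameter acts transitively on the $T_\infty$-link while leaving the $T_0$-link alone, and symmetrically for $a$. The only real difference is that you spell out the conjugation-and-residue-field computation justifying the transitivity of each one-parameter factor on its respective set of downward edges, whereas the paper simply asserts that $U_n^{E_0}$ and $U_n^{E_\infty}$ act transitively.
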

\begin{proof}

We begin by describing subsets of $U_n$ that act transitively on the sets 
\begin{equation*}
E_0 = \left\{\ell_\infty(n) \times e \vert e \subset T_0 \text{ an edge}, \ell_0(n) \in e, \ell_0(n+1) \not\in e\right\}
\end{equation*}
 and
\begin{equation*}
E_\infty = \left\{e \times \ell_0(n) \vert e \subset T_\infty \text{ an edge}, \ell_\infty(n) \in e, \ell_\infty(n+1) \not\in e\right\}
\end{equation*}

Let
\begin{equation*}
U_n^{E_0} = \left\{\left.\begin{pmatrix}1 & at^{-n} \\ 0 & 1\end{pmatrix}\right\vert a \in F\right\}
\end{equation*}
and 
\begin{equation*}
U_n^{E_\infty} = \left\{\left.\begin{pmatrix}1 & bt^{n} \\ 0 & 1\end{pmatrix}\right\vert b \in F\right\}
\end{equation*}

$U_n^{E_0}$ acts transitively on the set of edges in $T_0$ incident to $\ell_0(n)$ and not to $\ell_0(n+1)$. Since $U_n^{E_0}$ stabilizes $x_n$, it acts transitively on $E_0$. Denote by $e_0$ the edge in $E_0$ between $x_n$ and $(\ell_\infty(n), \ell_0(n-1))$. Let $e_a = \begin{pmatrix}1 & at^{-n} \\ 0 & 1\end{pmatrix} e_0$ for each $a \in F$.

Similarly, $U_n^{E_\infty}$ acts transitively on the set of edges in $T_\infty$ incident to $\ell_\infty(n)$ and not to $\ell_\infty(n+1)$. Since $U_n^{E_\infty}$ stabilizes $x_n$, it acts transitively on $E_\infty$. Denote by $f_0$ the edge in $E_\infty$ between $x_n$ and $(\ell_\infty(n-1), \ell_0(n))$. Let $f_b = \begin{pmatrix}1 & bt^{n} \\ 0 & 1\end{pmatrix} f_0$ for each $b \in F$.

Since any 2-cell in $S_n^\downarrow$ contains a unique pair of edges $e_a$ and $f_b$ in its boundary, 
\begin{equation*}
 U_n^{E_0} \times U_n^{E_\infty} = \left\{\left.\begin{pmatrix}1 & at^{-n} + bt^{n} \\ 0 & 1\end{pmatrix}\right\vert a,b \in F \right\}
 \end{equation*}
acts transitively on the set of 2-cells in $S_n^\downarrow$ as well as on $E_0$ and $E_\infty$.
\end{proof}

 Let $C^n_{0,0}$ be the 2-cell in $S_n^\downarrow$ containing $x_{n-1}$, and let $C^n_{a,b} = \begin{pmatrix}1 & at^{-n} + bt^{n} \\ 0 & 1\end{pmatrix} C^n_{0,0}$. Orient each edge $e_a$ and $f_b$ with initial vertex $x_n$. We may then choose an orientation for each $C^n_{a,b}$ such that we have $\partial C^n_{a,b} = e_a - f_b + D$ for some chain $D \subset S_n^\downarrow \cap \partial S_n$.

Define a cochain $\varphi_n$ by $\varphi_n(C^n_{a,b})=ab$. Since $X$ is 2-dimensional, any 2-cochain is a cocycle.

We will define cocycles in $H^2(\Gamma \backslash X_3; F)$ as sums of the $\varphi_n$'s. The following lemma will be used in proving that these cocycles are well-defined.

\begin{lemma}\label{invariant} $\varphi_n$ is $U_n$-invarant. \end{lemma}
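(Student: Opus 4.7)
The plan is to show that for each $u\in U_n$, the difference $u\cdot\varphi_n-\varphi_n$ is the coboundary of a relative $1$-cochain, so that $\varphi_n$ is fixed as a class in $H^2(S_n^\downarrow,S_n^\downarrow\cap\partial S_n;F)$.

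First I would decompose the parameter $y$ of $u=\begin{pmatrix}1 & y \\ 0 & 1\end{pmatrix}\in U_n$. Since $v_\infty(y),v_0(y)\geq -n$, the coefficient of $t^{-n}$ in the Laurent expansion of $y$ at $0$ is a unique $a_u\in F$ with $v_0(y-a_ut^{-n})\geq -n+1$, and the coefficient of $t^n$ in the expansion of $y-a_ut^{-n}$ at $\infty$ is a unique $b_u\in F$ with $v_\infty(y-a_ut^{-n}-b_ut^n)\geq -n+1$. Using $n\geq 1$ to control the cross-valuations (since $v_\infty(t^{-n})=n$ and $v_0(t^n)=n$), the remainder $y_0:=y-a_ut^{-n}-b_ut^n$ satisfies $v_\infty(y_0),v_0(y_0)\geq -n+1$. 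An analysis of the $U$-stabilizers of the four vertices of the square $C^n_{0,0}$ (parallel to the identification $U_n=\op{Stab}_U(x_n)$) shows that $U_{n-1}$ stabilizes $C^n_{0,0}$ pointwise; since $U_n$ is abelian, it then stabilizes every $C^n_{a,b}$. Thus $u\cdot C^n_{a,b}=C^n_{a+a_u,\,b+b_u}$, and
\begin{equation*}
(u\cdot\varphi_n-\varphi_n)(C^n_{a,b})=(a-a_u)(b-b_u)-ab=-ab_u-a_ub+a_ub_u.
\end{equation*}

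Next I would exhibit a relative $1$-cochain $\psi_u$ with $\delta\psi_u=u\cdot\varphi_n-\varphi_n$. The interior $1$-cells of $S_n^\downarrow$ are precisely the edges $e_a$ and $f_b$ meeting $x_n$, and since $\partial C^n_{a,b}=e_a-f_b+D_{a,b}$ with $D_{a,b}\subset S_n^\downarrow\cap\partial S_n$, any $1$-cochain $\psi$ vanishing on $\partial S_n$ satisfies $\delta\psi(C^n_{a,b})=\psi(e_a)-\psi(f_b)$. Setting $\psi_u(e_a):=-ab_u$, $\psi_u(f_b):=a_u(b-b_u)$, and $\psi_u:=0$ on all $1$-cells of $S_n^\downarrow\cap\partial S_n$, a direct check gives $\delta\psi_u(C^n_{a,b})=-ab_u-a_ub+a_ub_u$, matching the previous display and yielding $[u\cdot\varphi_n]=[\varphi_n]$. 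The main technical point is the Laurent-style decomposition of $y$, tracking $v_\infty$ and $v_0$ simultaneously; once that is settled the coboundary construction is essentially forced by the bilinear form of the defect.
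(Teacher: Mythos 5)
Your proof is correct and takes a genuinely different route from the paper. The paper works directly with cycles: it defines \emph{basic cycles} $C^n_{x,y}-C^n_{x',y}-C^n_{x,y'}+C^n_{x',y'}$, verifies $\varphi_n(uD)=\varphi_n(D)$ on these by the same translation computation $uC^n_{a,b}=C^n_{a+a_{-n},\,b+a_n}$ that you use, and then shows by a length-minimization argument that every relative $2$-cycle is a $\ZZ$-combination of basic cycles. You instead show that $u\cdot\varphi_n-\varphi_n$ is literally the coboundary of an explicit relative $1$-cochain $\psi_u$ supported on the interior edges $e_a,f_b$, so that $\varphi_n$ and $u\cdot\varphi_n$ agree on all relative cycles. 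Both proofs hinge on the same geometric input (that $U_{n-1}$ fixes $C^n_{0,0}$ pointwise, hence $u$ permutes the cells by index-translation), but your coboundary construction sidesteps the paper's combinatorial step of showing that basic cycles generate, which is the more delicate part of the paper's argument. One small inaccuracy: your parenthetical ``$U_n=\op{Stab}_U(x_n)$'' is not the paper's convention — $U_n$ has Laurent-polynomial entries while $\op{Stab}_U(x_n)$ is the larger group of rational-function entries with both valuations $\geq -n$ (the paper uses $\op{Stab}_U(x_n)$ separately in defining $Y_n$). What you actually need, namely $U_{n-1}\subseteq\op{Stab}_U(C^n_{0,0})$, is still correct, so this doesn't affect the argument, but the identification should not be stated as an equality.
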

\begin{proof}
Let a \emph{basic cycle} in $S_n^\downarrow$ be one of the form $C^n_{x,y} - C^n_{x^\prime,y} - C^n_{x,y^\prime} + C^n_{x^\prime,y^\prime}$. We will show first that $\varphi_n$ is $U_n$-invariant on basic cycles, then that any cycle is a sum of basic cycles.

Let 
\begin{equation*}
u = \begin{pmatrix}1 & \sum_{i=-n}^{n}a_i t^i \\ 0 & 1\end{pmatrix} \in U_n
\end{equation*}

and $D$ be a basic cycle.
\begin{align*}
\varphi_n(uD) &= \varphi_n(u(C^n_{x,y} - C^n_{x^\prime,y} - C^n_{x,y^\prime} + C^n_{x^\prime,y^\prime})) \\
&= \varphi_n(uC^n_{x,y} - uC^n_{x^\prime,y} - uC^n_{x,y^\prime} + uC^n_{x^\prime,y^\prime}) \\
&= \varphi_n(C^n_{x+a_{-n},y+a_{n}} - C^n_{x^\prime+a_{-n},y+a_{n}} - C^n_{x+a_{-n},y^\prime+a_{n}} + C^n_{x^\prime+a_{-n},y^\prime+a_{n}}) \\
&= (x+a_{-n})(y+a_{n}) - (x^\prime+a_{-n})(y+a_{n}) - (x+a_{-n})(y^\prime+a_{n})\\
& \hspace{2in} + (x^\prime+a_{-n})(y^\prime+a_{n}) \\
&= xy - x^\prime y - x y^\prime  + x^\prime y^\prime  \\
&= \varphi_n(D)
\end{align*}
Thus $\varphi_n$ is $U_n$-invariant on basic cycles.

It remains to show that all cycles are sums of basic cycles. Define the length $l$ of a chain in $H_2(S_n^\downarrow, S_n^\downarrow \cap \partial S_n)$ by
\begin{equation*}
l\left(\sum_{i,j \in J} \alpha_{i,j}C^n_{i,j}\right) = \sum_{i,j \in J} |\alpha_{i,j}|
\end{equation*}
Suppose there are cycles which are not sums of basic cycles. Let $B = \sum_{i,j \in J} \alpha_{i,j}C^n_{i,j}$ be such a cycle with the property that $l(B) \leq l(B^\prime)$, for $B^\prime$ any other such cycle. Let $C^n_{x,y}$ be a 2-cell such that $\alpha_{x,y} > 0$. Since $B$ is a cycle, $\partial B=0$. Therefore, there are some $x^\prime, y^\prime \in J$ such that $\alpha_{x^\prime,y}, \alpha_{x,y^\prime} < 0$. Then
\begin{equation*}
l(B-(C^n_{x,y} - C^n_{x^\prime,y} - C^n_{x,y^\prime} + C^n_{x^\prime,y^\prime})) \leq l(B)-2
\end{equation*}
Since this cycle differs from $B$ by a basic cycle, it cannot be written as a sum of basic cycles, contradicting the assumption that $B$ is the shortest cycle with that property.
\end{proof}

\section{Cohomology}

We now wish to define cocycles in $H^2(\Gamma \backslash X_3; F)$ by summing $\varphi_n$ over cosets of $U_\Gamma$ in $\Gamma$. 

Let $Y_n = \operatorname{Stab}_U(x_n) \cdot \Sigma$

\begin{lemma}
$Y_n$ is a fundamental domain for the action of $U^n$ on $X$.
\end{lemma}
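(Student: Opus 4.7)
The plan is to realize $Y_n$ as a fundamental domain for $U^n$ on $X$ by establishing the direct sum decomposition $U = \operatorname{Stab}_U(x_n) \oplus U^n$ as abelian subgroups of $U$, and then combining this decomposition with Lemma~\ref{sigmafundamental}. Identifying $U$ with the additive group $F(t)$, the summand $\operatorname{Stab}_U(x_n)$ corresponds to the $F$-subspace $\{x : v_\infty(x) \geq -n,\, v_0(x) \geq -n\}$, and $U^n$ corresponds to the subspace of Laurent polynomials whose coefficients in degrees $-n, \ldots, n$ all vanish.

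To establish the direct sum, I would first verify the trivial intersection: any element lying in both subspaces is a Laurent polynomial whose coefficients vanish on $[-n, n]$ (from $U^n$) and whose support is contained in $[-n, n]$ (from the valuation conditions), forcing it to be zero. For the spanning property, given $x \in F(t)$, I would first extract $w^+ \in U^n$ as the sum of terms of degree strictly greater than $n$ in the Laurent expansion of $x$ at $\infty$, then extract $w^- \in U^n$ as the sum of terms of degree strictly less than $-n$ in the Laurent expansion of $x - w^+$ at $0$; the remainder $x - w^+ - w^-$ would then satisfy $v_\infty \geq -n$ and $v_0 \geq -n$, placing it in $\operatorname{Stab}_U(x_n)$. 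Since $U$ is abelian, the covering property is now automatic:
\[
U^n \cdot Y_n \;=\; U^n \cdot \operatorname{Stab}_U(x_n) \cdot \Sigma \;=\; U \cdot \Sigma \;=\; X,
\]
where the last equality is Lemma~\ref{sigmafundamental}.

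For uniqueness of $U^n$-orbit representatives in $Y_n$, I would start from an equation $u^n y_1 = y_2$ with $u^n \in U^n$ and $y_i = s_i \sigma_i \in Y_n$ (where $s_i \in \operatorname{Stab}_U(x_n)$ and $\sigma_i \in \Sigma$), rewrite it using commutativity as $(s_2^{-1} s_1 u^n)\sigma_1 = \sigma_2$, and apply Lemma~\ref{sigmafundamental} to obtain $\sigma_1 = \sigma_2 =: \sigma$ and $\eta := s_2^{-1} s_1 u^n \in \operatorname{Stab}_U(\sigma)$. The hard part will be showing that $\operatorname{Stab}_U(\sigma)$ respects the direct sum decomposition: if $\eta = \eta_V + \eta_W$ with $\eta_V \in \operatorname{Stab}_U(x_n)$ and $\eta_W \in U^n$, then each summand individually lies in $\operatorname{Stab}_U(\sigma)$. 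Writing $\sigma = (\ell_\infty(\alpha), \ell_0(\beta))$, the stabilizer has the explicit form $\{x : v_\infty(x) \geq -\lfloor \alpha \rfloor,\; v_0(x) \geq -\lfloor \beta \rfloor\}$, and I expect to establish the compatibility by tracking the valuations of the pieces $w^+, w^-$ above, splitting into cases according to whether $\lfloor \alpha \rfloor$ and $\lfloor \beta \rfloor$ exceed $n$. Once this compatibility is in hand, the $\operatorname{Stab}_U(x_n)$-component of $\eta$ forces $s_2^{-1} s_1 \in \operatorname{Stab}_U(\sigma)$, giving $y_1 = y_2$, while the $U^n$-component forces $u^n \in \operatorname{Stab}_U(\sigma) \cap U^n$, which merely says that $u^n$ fixes $y_1$; so the $U^n$-action on $Y_n$ need not be free, but each $U^n$-orbit meets $Y_n$ in exactly one point.
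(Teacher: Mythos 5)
Your covering argument is correct and is a nice algebraic alternative to the paper's: the paper shows $U x_n = U^n x_n$ by acting inside the building with the groups $V_i$, whereas you establish the internal direct sum $U = \operatorname{Stab}_U(x_n) \oplus U^n$ directly from the Laurent expansions at $\infty$ and at $0$. That decomposition is correct (trivial intersection because no Laurent polynomial supported outside $[-n,n]$ can have both $v_\infty \geq -n$ and $v_0 \geq -n$; spanning by your $w^+, w^-$ recipe), and once it is in hand the covering $U^n Y_n = U\Sigma = X$ is immediate.

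The uniqueness half, however, rests on the compatibility claim you flag as ``the hard part,'' namely that if $\eta \in \operatorname{Stab}_U(\sigma)$ decomposes as $\eta = \eta_V + \eta_W$ with $\eta_V \in \operatorname{Stab}_U(x_n)$, $\eta_W \in U^n$, then each summand lies in $\operatorname{Stab}_U(\sigma)$. This claim is false, and the case split on $\lfloor\alpha\rfloor, \lfloor\beta\rfloor$ against $n$ will not save it. Take $\sigma = (\ell_\infty(n+5),\ \ell_0(-(n+5)))$, so that
\[
\operatorname{Stab}_U(\sigma) = \left\{x \in F(t) : v_\infty(x) \geq -(n+5),\ v_0(x) \geq n+5\right\}.
\]
The element $\eta = \dfrac{t^{n+5}}{t-1}$ has $v_\infty(\eta) = -(n+4)$ and $v_0(\eta) = n+5$, hence $\eta \in \operatorname{Stab}_U(\sigma)$. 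Its decomposition under $U = \operatorname{Stab}_U(x_n)\oplus U^n$ is
\[
\eta_V = \frac{t^{n+1}}{t-1}, \qquad \eta_W = t^{n+1}+t^{n+2}+t^{n+3}+t^{n+4},
\]
and $v_0(\eta_V) = n+1 < n+5$, so $\eta_V \notin \operatorname{Stab}_U(\sigma)$. This is not merely a hole in your write-up; it defeats the statement you are trying to prove by this route: taking $s_1 = \eta_V$, $s_2 = \operatorname{Id}$, $u^n = \eta_W$, the points $y_1 = s_1\sigma$ and $y_2 = \sigma$ both lie in $Y_n$, are distinct (since $s_1 \notin \operatorname{Stab}_U(\sigma)$), yet satisfy $u^n y_1 = y_2$. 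The issue arises exactly when one Busemann coordinate of $\sigma$ is large positive and the other is very negative (here $\lfloor\beta\rfloor < -(n+1)$), a regime your proposed case analysis does not close. You should be aware that the paper's own uniqueness argument passes over the assertion ``$a,b \geq n+1$'' without justifying the bound on $b$, so this appears to be a difficulty with the lemma's proof rather than something specific to your approach; but as written your uniqueness argument does not go through.
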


\begin{proof}
We will show first that $Y_n$ intersects every $U^n$-orbit in $X$. That is,
\begin{equation*}
X = U^n \operatorname{Stab}_U(x_n) \Sigma
\end{equation*}
Since $U \Sigma = X$, it suffices to show that $U x_n = U^n x_n$. Let $u \in U$. Since $u$ preserves $\beta_\rho$, there is some $m>n$ such that $ux_m = x_m$. By lemma \ref{actstransitively}, for every $n<i\leq m$, the group 
\begin{equation*}
V_i = \left\{\left.\begin{pmatrix}1 & a_{-i}t^{-i} + a_i t^i \\ 0 & 1\end{pmatrix}\right\vert a_{-i},a_i \in F \right\}
\end{equation*}
acts transitively on the set of 2-cells in $S_i^\downarrow$. In particular, it acts transitively on $\beta_\rho^{-1}(i-1) \cap B_{\sqrt{2}}(x_i)$ where $B_r(y)$ denotes the sphere of radius $r$ centered at $y$. It follows that $V_m \cdot V_{m-1} \dots V_{n+1}$ acts transitively on $\beta_\rho^{-1}(n) \cap B_{\sqrt{2}(m-n)}(x_m)$.

Since $U$ acts by $\beta_\rho$-preserving isometries, $ux_n \in \beta_\rho^{-1}(n) \cap B_{\sqrt{2}(m-n)}(x_m)$. Thus, there is some $u^\prime \in V_m \cdot V_{m-1} \dots V_{n+1}$ with $u^\prime x_n = u x_n$.

It remains to show that $Y_n$ contains exactly one point of each $U^n$-orbit.

Suppose there is some $u^n = \begin{pmatrix}1 & \alpha \\ 0 & 1\end{pmatrix} \in U^n$ and $x \in Y_n$ such that $u^n x \in Y_n$. We will first consider the case that $x \in \Sigma$, so that $x = (\ell_\infty(a), \ell_0(b))$ for some $a,b \in \RR$. If $u^n \neq \operatorname{Id}$, then either $v_\infty(\alpha)\leq -(n+1)$ or $v_0(\alpha) \leq -(n+1)$. Therefore either
\[u^n \cdot L_\infty \cap \pi_\infty(Y_n) \subseteq \ell_\infty([n+1,\infty))\]
or
\[u^n \cdot L_0 \cap \pi_0(Y_n) \subseteq \ell_0([n+1,\infty))\]
where $\pi_\infty$ and $\pi_0$ are orthogonal projections to $T_\infty$ and $T_0$ respectively. Therefore if $u^n x \in Y_n$, we have $a,b \geq n+1$ and $u^n x = x$.

Now consider general $x \in Y_n$. By the definition of $Y_n$, $x=vy$ for some $v \in \operatorname{Stab}_U(x_n)$ and $y \in \Sigma$. Since $U$ is abelian, $u^n x = v u^n y$. Since $v u^n y \in Y_n$, so is $u^n y$. As above, this implies that $y = u^n y$ and thus that $x = u^n x$.

\end{proof}

We may identify $Y^n$ with the quotient $U^n \backslash X$. Let $\theta_n:X \to Y_n$ be the quotient map. Note that $\theta_n$ is $U_n$-equivariant since $U$ is abelian.

We may now define $\Phi_n \in H^2(\Gamma \backslash X_3; F)$ by 
\begin{equation*}
\Phi_n(\Gamma D) = \sum_{U_\Gamma g \in U_\Gamma \backslash \Gamma} \varphi_n (\theta_n(\psi(gD)) \cap S_n^\downarrow)
\end{equation*}
for any 2-cell $D$ in $X_3$.
The next several lemmas show that $\Phi_n$ is well-defined as a formal sum, always contains a finite number of nonzero terms, is a cocycle, and is $U_n$-invariant.

\begin{lemma} \label{welldef}
$\Phi_n$ is well-defined. In particular, it is independent of the choice of coset representatives in $\Gamma D$ and $U_\Gamma g$.
\end{lemma}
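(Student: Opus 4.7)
The plan is to verify that $\Phi_n$ is well-defined by treating the two representative choices separately, each via a direct calculation that invokes the structural properties of $\theta_n$, $S_n^\downarrow$, and $\varphi_n$ established in the previous lemmas.

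First I would check invariance under the choice of coset representative of $U_\Gamma g$. Writing a second representative as $ug$ with $u \in U_\Gamma$, the $\Gamma$-equivariance of $\psi$ yields $\psi(ugD) = u\psi(gD)$. Since $U_\Gamma = U_n U^n$ with $U_n \cap U^n = \operatorname{Id}$, I can factor $u = u_n u^n$ uniquely. The quotient map $\theta_n$ is $U^n$-invariant (as $Y_n$ is a fundamental domain for $U^n$) and $U_n$-equivariant (because $U$ is abelian, so $U_n$ commutes with $U^n$), so
\begin{equation*}
\theta_n(u\psi(gD)) = u_n \theta_n(\psi(gD)).
\end{equation*}
Since $S_n^\downarrow = U_n C$ is $U_n$-stable, intersection with $S_n^\downarrow$ commutes with the $u_n$-action:
\begin{equation*}
u_n\theta_n(\psi(gD)) \cap S_n^\downarrow = u_n\bigl(\theta_n(\psi(gD)) \cap S_n^\downarrow\bigr).
\end{equation*}
Applying Lemma \ref{invariant} then shows that the corresponding terms of the sum are equal.

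Second I would verify invariance under the choice of representative of the orbit $\Gamma D$. Substituting $\gamma D$ for $D$ with $\gamma \in \Gamma$ gives
\begin{equation*}
\Phi_n(\Gamma \gamma D) = \sum_{U_\Gamma g \in U_\Gamma \backslash \Gamma} \varphi_n\bigl(\theta_n(\psi(g\gamma D)) \cap S_n^\downarrow\bigr),
\end{equation*}
and reindexing by $g' = g\gamma$, which induces a bijection of cosets $U_\Gamma g \leftrightarrow U_\Gamma g\gamma$, transforms the right-hand side back into $\Phi_n(\Gamma D)$.

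The main obstacle is the first step, specifically keeping track of the interplay between the commuting subgroups $U_n$ and $U^n$ and the objects $\theta_n$, $S_n^\downarrow$, and $\varphi_n$. Once one observes that $\theta_n$ kills $U^n$ while remaining $U_n$-equivariant, that $S_n^\downarrow \subseteq Y_n$ is $U_n$-stable, and that $\varphi_n$ is $U_n$-invariant by Lemma \ref{invariant}, the computation reduces to a short diagram chase.
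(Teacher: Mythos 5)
Your proposal is correct and follows the paper's argument in both parts: decompose $u = u_n u^n$, use $U^n$-invariance and $U_n$-equivariance of $\theta_n$ together with $U_n$-invariance of $\varphi_n$ (Lemma~\ref{invariant}) for the first representative choice, and reindex the sum for the second. You are slightly more explicit than the paper about the step where intersection with the $U_n$-stable set $S_n^\downarrow$ commutes with the $u_n$-action, but the argument is the same.
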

\begin{proof}
This proof closely follows the proof of  a lemma from \cite{CesaKelly2013}.
Let $u \in U_\Gamma, u=u^n u_n$ for $u^n \in U^n$ and $u_n \in U_n$. Then
\begin{align*}
\varphi_n(\theta_n (\psi(ugD))\cap S_n^\downarrow) 
&= \varphi_n(\theta_n (u \psi(gD))\cap S_n^\downarrow) \\
&= \varphi_n( \theta_n (u^n u_n \psi(gD)) \cap S_n^\downarrow) \\
&= \varphi_n( \theta_n (u_n \psi(gD)) \cap S_n^\downarrow) \\
&= \varphi_n( u_n \theta_n (\psi(gD)) \cap S_n^\downarrow) \\
&= \varphi_n( \theta_n (\psi(gD)) \cap S_n^\downarrow)
\end{align*}
So $\Phi_n$ is independent of coset representative in $U_\Gamma g$.

Let $\gamma \in \Gamma$. Then
\begin{align*}
\Phi_n(\Gamma (\gamma D)) &= \sum_{U_\Gamma g \in U_\Gamma \backslash \Gamma} \varphi_n (\theta_n (\psi(g\gamma D)\cap S_n^\downarrow)) \\
&= \sum_{U_\Gamma (g \gamma) \in U_\Gamma \backslash \Gamma} \varphi_n (\theta_n (\psi(g \gamma D)\cap S_n^\downarrow)) \\
&= \sum_{U_\Gamma g \in U_\Gamma \backslash \Gamma} \varphi_n (\theta_n (\psi(g D)\cap S_n^\downarrow)) \\
&= \Phi_n(\Gamma D)
\end{align*}
Therefore $\Phi_n$ is also independent of coset representative in $\Gamma D$.
\end{proof}

\begin{lemma} \label{finite-sum}
For any 2-cell $D$ in $X_3$, the formal sum 
\begin{equation*}
\Phi_n(\Gamma D) = \sum_{U_\Gamma g \in U_\Gamma \backslash \Gamma} \varphi_n (\theta_n(\psi(gD)) \cap S_n^\downarrow)
\end{equation*}
has a finite number of nonzero terms.
\end{lemma}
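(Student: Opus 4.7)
The plan is to use $\Gamma$-equivariance to rewrite $\psi(gD)$ as $g\psi(D)$, write $\psi_*(D)$ as a finite 2-chain, reduce to counting $U_\Gamma$-cosets that translate a single 2-cell into $U^n S_n^\downarrow$, and then apply Lemma \ref{nooverlap} together with a Busemann-function calculation to bound that count.

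First, assuming $\psi$ is cellular (which may be arranged up to homotopy without changing the cohomology class of $\Phi_n$), the pushforward $\psi_*(D) = \sum_{i=1}^k n_i c_i$ is a finite 2-chain in $X$. By equivariance, $\psi_*(gD) = \sum_i n_i (g c_i)$, and the term of $\Phi_n(\Gamma D)$ indexed by $U_\Gamma g$ can be nonzero only if some $g c_i$ equals a cell $u^n C^n_{a,b}$ with $u^n \in U^n$ and $ab \neq 0$. In particular $g c_i \in U^n S_n^\downarrow \subset H$, so $c_i \in g^{-1} H$. It thus suffices to show that, for each fixed 2-cell $c$ of $X$, only finitely many cosets $U_\Gamma g$ satisfy $g c \in U^n S_n^\downarrow$.

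Second, if two such $g$ and $g'$ exist for a single $c$, then $c \in g^{-1} H \cap g'^{-1} H$, and Lemma \ref{nooverlap} forces $g^{-1} H = g'^{-1} H$, hence $g' g^{-1} \in P_{\Gamma_F} \cap \Gamma = P_\Gamma$. All qualifying $g$ therefore lie in a single left coset $P_\Gamma g_0$. Using $U_\Gamma \triangleleft P_\Gamma$ and $P_\Gamma / U_\Gamma \cong A_\Gamma = (A_F \cap \Gamma)\cdot D^\ZZ$ with $D = \begin{pmatrix} t & 0 \\ 0 & t^{-1} \end{pmatrix}$, and using that $A_F$ acts trivially on $X$, the distinct $U_\Gamma$-cosets within $P_\Gamma g_0$ that are geometrically relevant are parametrized by the integer $k$ in the $D^\ZZ$-factor.

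Third, I claim that the 2-cell collections $\{D^k U^n S_n^\downarrow\}_{k \in \ZZ}$ are pairwise disjoint. Every $u^n \in U^n$ lies in the upper triangular subgroup, so it preserves the Busemann function $\beta_\infty$. Since the vertices of cells in $S_n^\downarrow$ have $\beta_\infty$-values in $\{n-1, n\}$, so do the vertices of cells in $U^n S_n^\downarrow$. Applying the identity $\beta_\infty(Dy) = \beta_\infty(y) + 2$ from the proof of Lemma \ref{psstabilizer}, the vertices of cells in $D^k U^n S_n^\downarrow$ have $\beta_\infty$-values in $\{n-1+2k, n+2k\}$; for distinct values of $k$ these two-element sets are disjoint, so the corresponding 2-cell collections cannot share a cell. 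Consequently $g_0 c$ lies in $D^{-k} U^n S_n^\downarrow$ for at most one $k \in \ZZ$, giving at most one contributing $U_\Gamma$-coset per cell $c$. Since there are only finitely many cells $c_i$ in the support of $\psi_*(D)$, the sum has only finitely many nonzero terms.

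The main obstacle is the Busemann-function disjointness in the third paragraph; the first two paragraphs are essentially structural bookkeeping built on Lemma \ref{nooverlap}.
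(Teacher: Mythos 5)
Your proposal follows essentially the same strategy as the paper's: both reduce via Lemma \ref{nooverlap} to cosets in a single $P_\Gamma$-translate (the paper does this through the decomposition $D = \bigcup D_i \cup \overline{D}$ with $D_i = D \cap \psi^{-1}(\alpha_i H)$), and both conclude by showing the $D^{\ZZ}$-direction of $A_\Gamma$ displaces $S_n^\downarrow$. Where the paper invokes compactness of $\psi(D_i)$ together with the assertion that $aS_n^\downarrow \cap S_n^\downarrow = \emptyset$ for $a \in A$, you instead take a cellular representative of $\psi_*(D)$ and argue cell-by-cell, establishing disjointness of $\{D^kU^nS_n^\downarrow\}_{k\in\ZZ}$ by a direct $\beta_\infty$-computation. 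That computation is correct --- writing $D^k u^n = (D^k u^n D^{-k})D^k$ with $D^k u^n D^{-k} \in U$ shows $\beta_\infty$ shifts exactly by $2k$ --- and it is, if anything, more explicit than the paper's one-line version; the cellular-approximation hypothesis you add is harmless.

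There is, however, a gap in your third paragraph which you half-acknowledge with the phrase ``geometrically relevant,'' and which the paper's proof also does not really close. The quotient $P_\Gamma/U_\Gamma \cong A_\Gamma$ splits as $(A_F\cap\Gamma)\cdot D^{\ZZ}$, and $A_F\cap\Gamma \cong J^{*}$ can be infinite (e.g.\ $J$ a field, or $J=\ZZ[\tfrac{1}{2}]$). Your argument shows at most one value of $k$ can contribute for each cell $c$, but it says nothing that bounds the $J^{*}$-many cosets $U_\Gamma\,\mathrm{diag}(u,u^{-1})D^k g_0$. Appealing to ``$A_F$ acts trivially'' does not rescue this: even if it were true, infinitely many distinct $U_\Gamma$-cosets each contributing the same nonzero $\varphi_n$-value is still infinitely many nonzero terms in the formal sum. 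And in fact $A_F$ does \emph{not} act trivially on $X$: $\mathrm{diag}(u,u^{-1})$ fixes $\Sigma$ pointwise but acts nontrivially on the link of $x_n$, sending $C^n_{a,b}$ to $C^n_{u^2a,\,u^2b}$. The paper's proof has the same soft spot (its claim ``$aS_n^\downarrow \cap S_n^\downarrow = \emptyset$ for every $a\in A$'' fails for $a\in A_F$), so you are in good company, but to make either argument airtight one needs to either bound $J^{*}$ or modify the definition of $\Phi_n$ so that the $A_F\cap\Gamma$-ambiguity is quotiented out.
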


\begin{proof}
Since $D$ is compact, so is $\psi(D)$. Therefore $\psi(D)$ must intersect a finite number of $\Gamma$-translates of $H$. We may therefore write
\begin{equation*}
D = \left(\bigcup_{i = 1}^k (D \cap \psi^{-1}(\alpha_i H))\right) \cup \overline{D}
\end{equation*}
where $k$ is some finite number, $\alpha_i \in \Gamma$ and $\overline{D}$ is disjoint from the interior of $\Gamma H$.

Let $D_i = D \cap \psi^{-1}(\alpha_i H)$ 

Then we have
\begin{equation*}
\Phi_n(\Gamma D) = \Phi_n(\Gamma \overline{D}) + \sum_{i=1}^k\Phi_n(\Gamma D_i) 
\end{equation*}

Since $\theta_n(\psi(g\overline{D}) \cap S_n^\downarrow = \emptyset$ for every $g \in \Gamma$, we have $\Phi_n(\Gamma \overline{D})=0$. Therefore, it suffices to show that $\Phi_n(\Gamma D_i)$ is a finite sum for each $i$. Let 
\begin{equation*}
C_i = \left\{\left. \gamma \in \Gamma \right\vert \gamma \psi(D_i) \cap S_n^\downarrow \neq \emptyset  \right\}
\end{equation*}
and
\begin{equation*}
C_i^\prime = \left\{\left.U_\Gamma \gamma \in U_\Gamma \backslash \Gamma \right\vert \xi \in C_i \text{ for some } \xi \in U_\Gamma \gamma \right\}
\end{equation*}
We will show that $C_i^\prime$ is finite for each $i$.

Suppose first that $\alpha_i H = H$. Then $C_i \subset P_\Gamma = U_\Gamma A_\Gamma$. Suppose $ua \in C$, with $u \in U_\Gamma$ and $a \in A_\Gamma$. Since either $uS_n^\downarrow \cap S_n^\downarrow = \emptyset$ or $u$ fixes $S_n^\downarrow$ pointwise, we may conclude that $a \in C$. Since $\psi(D_i)$ is compact and $a S_n^\downarrow \cap S_n^\downarrow = \emptyset$ for every $a \in A$, $C_i$ contains finitely many elements of $A$. Thus $C^\prime_i$ is finite.

Now consider the general $\alpha_i$ with nonempty $C_i$ and let $\gamma \in C_i$. Since $\Phi_n(\Gamma \gamma D_i) = \Phi_n(\Gamma D_i)$ and $\gamma D_i \subset H$, the result follows from the argument above.

\end{proof}

\begin{lemma} \label{cocycle}
$\Phi_n$ is a cocycle.
\end{lemma}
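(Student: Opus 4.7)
The plan is to realize $\Phi_n$ as a sum of pullbacks of a single 2-cocycle, which forces the cocycle property. First I would extend $\varphi_n$ to a 2-cochain $\tilde\varphi_n$ on all of $Y_n$ by declaring it to vanish on every 2-cell of $Y_n$ not contained in $S_n^\downarrow$. Since $X = T_\infty \times T_0$ is 2-dimensional, so is $Y_n$; in particular $Y_n$ has no 3-cells, so $\delta\tilde\varphi_n = 0$ holds vacuously and $\tilde\varphi_n$ is a cellular 2-cocycle on $Y_n$.

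Next, for each coset representative $g \in U_\Gamma \backslash \Gamma$, I would consider the continuous composition $\theta_n \circ \psi \circ g : X_3 \to Y_n$ and, after replacing it by a cellular approximation, pull $\tilde\varphi_n$ back to a cellular 2-cocycle on $X_3$. By Lemma \ref{finite-sum}, the formal sum
\begin{equation*}
\sum_{U_\Gamma g \in U_\Gamma \backslash \Gamma} (\theta_n \circ \psi \circ g)^* \tilde\varphi_n
\end{equation*}
has only finitely many nonzero terms on any given 2-cell of $X_3$, so it defines a 2-cochain; as a sum of cocycles it is itself a cocycle. To identify this cochain with $\Phi_n$, I would observe that since $\tilde\varphi_n$ is zero outside $S_n^\downarrow$, the identity $\tilde\varphi_n(\theta_n(\psi(gD))) = \varphi_n(\theta_n(\psi(gD)) \cap S_n^\downarrow)$ holds for every 2-cell $D$ in $X_3$, matching the defining formula for $\Phi_n$.

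The main obstacle is the passage from $\psi$ (only continuous by construction) to a cellular approximation, together with verifying that this does not alter $\Phi_n$. Because $\varphi_n$ is a 2-cocycle on the 2-dimensional relative complex $(S_n^\downarrow, S_n^\downarrow \cap \partial S_n)$, any cellular homotopy of $\psi$ modifies $\theta_n(\psi(gD))$ only by a relative 2-boundary, on which $\varphi_n$ vanishes. Consequently the evaluation is invariant under the choice of cellular approximation, and the two formulas for $\Phi_n$ agree. An alternative route, which bypasses the pullback formalism entirely, is to argue directly that $\psi_*(g\partial E) = \partial \psi_*(gE) = 0$ in cellular 2-chains of $X$ for each 3-cell $E$ of $X_3$ and each $g$, because $\psi_*(gE)$ lives in $C_3^{\mathrm{cell}}(X) = 0$; this immediately gives $\Phi_n(\partial E) = 0$ summand by summand.
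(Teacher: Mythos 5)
Your proposal is correct. The ``alternative route'' you sketch at the end is in fact the paper's own argument, phrased a bit more sharply: the paper takes a 3-cell $D$, observes that $\partial D$ is a 2-sphere whose image in $X$ must be trivial (``$X$ contains no nontrivial 2-spheres''), and concludes that $\theta_n(\psi(g\partial D))\cap S_n^\downarrow$ is the zero chain, so $\varphi_n$ evaluates to zero on it and $\Phi_n(\Gamma D)=0$. Your version via $C_3^{\mathrm{cell}}(X)=0$, giving $\psi_*(g\partial E)=\partial\psi_*(gE)=0$ directly, is if anything cleaner, since it uses only that $X$ is 2-dimensional rather than that it is contractible. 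Your primary route --- extending $\varphi_n$ by zero to a 2-cochain $\tilde\varphi_n$ on $Y_n$, noting it is automatically a cocycle because $Y_n$ has no 3-cells, and expressing $\Phi_n$ as a locally finite sum of pullbacks $(\theta_n\circ\psi\circ g)^*\tilde\varphi_n$ --- is a correct but heavier repackaging of the same fact; both arguments ultimately reduce to the 2-dimensionality of the target complex. One small inaccuracy worth flagging: a cellular homotopy $K$ between two cellular approximations of $\theta_n\circ\psi\circ g$ changes the image chain of a 2-cell $D$ by $K(\partial D)$ (the term $\partial K(D)$ vanishes since $Y_n$ has no 3-cells), and this is merely a 2-chain, not a relative 2-boundary; there is no a priori reason for $\varphi_n$ to vanish on it. But that is an issue about the well-definedness of the specific cochain $\Phi_n$ (which the paper itself glosses over when it silently treats $\theta_n(\psi(gD))\cap S_n^\downarrow$ as a cellular 2-chain), not about the cocycle property: whichever approximation is fixed, your pullback argument shows the resulting $\Phi_n$ is a cocycle.
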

\begin{proof}
Let $\Gamma D$ be a 3-cell in $\Gamma \backslash X_3$ corresponding to the 3-cell $D$ in $X_3$. Then $\partial D$ is a 2-sphere. Since $X$ contains no nontrivial 2-spheres, $\psi(\partial D)$ is trivial. Thus, 
\begin{equation*}
\varphi_n (\theta_n(\psi(g\partial D)) \cap S_n^\downarrow)= 0
\end{equation*}
 for any $g \in \Gamma$ and $\Phi_n(\Gamma D) = 0$.

\end{proof}
\begin{lemma} \label{Un-invariant}
$\Phi_n$ is $U_n$-invariant.
\end{lemma}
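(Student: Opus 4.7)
The plan is to mimic the term-by-term argument in the second half of the proof of Lemma \ref{welldef} --- where a $U_\Gamma$-element was factored as $u^n u_n$ and the two pieces were peeled off using $\theta_n$ and Lemma \ref{invariant} respectively --- but now apply the $U_n$-piece of that argument to an arbitrary $u \in U_n$, not merely to a $U_n$-factor coming from $U_\Gamma$. Concretely, for $u \in U_n$ and any 2-cell $D$ of $X_3$, I want to show
\begin{equation*}
\varphi_n\bigl(\theta_n(u\psi(gD)) \cap S_n^\downarrow\bigr) = \varphi_n\bigl(\theta_n(\psi(gD)) \cap S_n^\downarrow\bigr)
\end{equation*}
for every coset representative $g$; summing over $U_\Gamma \backslash \Gamma$ then yields the lemma.

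The argument rests on three already-established facts. First, $\theta_n$ is $U_n$-equivariant (noted just before the definition of $\Phi_n$, since $U$ is abelian), so $\theta_n(u\psi(gD)) = u\,\theta_n(\psi(gD))$. Second, $u$ stabilizes $S_n^\downarrow$: because $S_n^\downarrow = U_n C$ and $U$ is abelian, $u S_n^\downarrow = U_n(uC) = U_n C = S_n^\downarrow$, so intersection with $S_n^\downarrow$ commutes with acting by $u$. Third, Lemma \ref{invariant} supplies $\varphi_n(u \cdot Z) = \varphi_n(Z)$ for any relative cycle $Z$ in $(S_n^\downarrow, S_n^\downarrow \cap \partial S_n)$. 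Chaining these three facts in order produces the displayed equality, and summing over cosets finishes the proof.

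The step I expect to cause the most friction is invoking Lemma \ref{invariant} in the final equality: that lemma is proved for relative cycles, so I have to confirm that $\theta_n(\psi(gD)) \cap S_n^\downarrow$ really is such a cycle. Assuming $\psi$ has been chosen to be cellular, this reduces to checking that any piece of the boundary of the intersection either lies in the 1-skeleton of $X$ --- where the 2-cochain $\varphi_n$ vanishes for dimensional reasons --- or lies in $\partial S_n$, where the relative condition absorbs it. This is routine bookkeeping but is the one nontrivial verification that cannot be skipped before the chain of three equalities may be invoked term by term.
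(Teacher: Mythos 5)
Your three-step chain --- $U_n$-equivariance of $\theta_n$, $U_n$-stability of $S_n^\downarrow$, then Lemma \ref{invariant} --- is the right skeleton and matches the paper's strategy. But you have misjudged the final verification. You treat confirming that $\theta_n(\psi(gD)) \cap S_n^\downarrow$ is a relative cycle in $(S_n^\downarrow, S_n^\downarrow \cap \partial S_n)$ as routine bookkeeping, and your sketch of it --- any boundary lies either in the 1-skeleton (where the 2-cochain vanishes) or in $\partial S_n$ --- misidentifies the real failure mode. The observation that $\varphi_n$ vanishes on 1-cells is irrelevant: the question is whether the input chain is a relative \emph{cycle}, not what $\varphi_n$ does to its boundary. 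The dangerous boundary consists of 1-cells on the internal frontier of $S_n^\downarrow$ inside $S_n$, i.e.\ edges separating $S_n^\downarrow$ from $S_n \setminus S_n^\downarrow$. These lie neither in $\partial S_n$ nor anywhere that dimensional vanishing helps; if $\op{Supp}(\theta_n(\psi(gD)))$ crossed them, the restriction to $S_n^\downarrow$ would fail to be relative-closed and Lemma \ref{invariant} would simply not apply.

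Ruling this out is the entire content of the paper's proof, and it is not routine. One sets $H_n = \beta_\rho^{-1}([\beta_\rho(x_n),\infty))$, notes $\psi(X_1) \cap H_n = \emptyset$ by construction of $\psi$, and uses that $U^n$ preserves $H_n$ to conclude $\partial\theta_n(\psi(D))$ avoids $\theta_n(H_n)$. Then, supposing some 2-cell of $\op{Supp}(\theta_n(\psi(D)))$ lies in $S_n$ but outside $S_n^\downarrow$, one propagates outward from cell to cell along edges interior to $H_n$: such an edge cannot carry boundary, so if one adjacent 2-cell is in the support the other must be too, and the local structure of $U_n\Sigma$ forces the next 2-cell uniquely. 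This yields infinitely many cells in the support, contradicting compactness of $\theta_n(\psi(D))$. You need some version of this induction; without it your chain of equalities never lawfully reaches Lemma \ref{invariant}.
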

\begin{proof}
It suffices to show that $\theta_n \psi(D) \cap S_n$ is supported on $S_n^\downarrow$ for every disk $D$ in $X_3$ since $\varphi_n$ is $U_n$-invariant.

Let $H_n = \beta_\rho^{-1}([\beta_\rho(x_n),\infty))$. Since $H_n \subset H$, the definition of $\psi$ implies that $\psi(X_1) \cap H_n = \emptyset$. Thus, $\partial \psi(D)$ is entirely outside of $H_n$. Since $U^n$ preserves $H_n$, we may also conclude that $\partial \theta_n(\psi(D))$ is outside $\theta_n(H_n)$.

Suppose there is some cell $C_0 \subset \op{Supp}(\theta_n(\psi(D))$ with $C_0 \subset S_n$ and $C_0 \not\subset S_n^\downarrow$. We will find an infinite family of cells which must also be contained in $\op{Supp}(\theta_n(\psi(D))$, contradicting the compactness of $\theta_n(\psi(D))$.

Since $S_n \subset U_n \Sigma$ and $S_n^\downarrow$ is $U_n$-invariant, we may assume without loss of generality that $C_0 \subset \Sigma$. Then $C_0$ is one of $\ell_\infty([n-1,n]) \times \ell_0([n,n+1])$, $\ell_\infty([n,n+1]) \times \ell_0([n-1,n])$, or $\ell_\infty([n,n+1]) \times \ell_0([n,n+1])$. We will examine in detail the case where $C_0 = \ell_\infty([n-1,n]) \times \ell_0([n,n+1])$. The others are parallel.

Let $C_i = \ell_\infty([n-1,n]) \times \ell_0([n+i,n+i+1])$ and $e_i = \ell_\infty([n-1,n]) \times \ell_0(n+i)$ so that $C_i, e_i \subset \Sigma$ with $C_{i-1}$ and $C_i$ adjacent along $e_i$. We will show by induction that all $C_i$ are in $\op{Supp}(\theta_n(\psi(D))$.

Since $e_i \subset H_n$, we have $e_i \not\subset \partial \theta_n(\psi(D))$. Thus, if $\op{Supp}(\theta_n(\psi(D))$ contains $C_{i-1}$, it must also contain some other cell with $e_i$ in its boundary. Such a cell must be of the form $\ell_\infty([n-1,n]) \times e$ for some edge $e \subset T_0$ with $\ell_0(n+i)$ one endpoint. Since the action of $U_n$ on $T_0$ fixes all edges incident to $\ell_0(n+i)$, the only cells of this form in $U_n \Sigma$ are the ones in $\Sigma$ itself: $C_{i-1}$ and $C_i$. Thus $C_i \subset \op{Supp}(\theta_n(\psi(D))$.

This shows that $\op{Supp}(\theta_n(\psi(D))$ contains infinitely many cells and is not compact.
\end{proof}

Now that we have defined a family of cocyles, it remains to show that they are independent. We will do this by exhibiting a family of disks $\widetilde{B_{2n}}\subset X_3$ such that $\Phi_{2n}(\Gamma \widetilde{B_{2n}})=1$ and  $\Phi_k(\Gamma \widetilde{B_{2n}})=0$ for $k > 2n$.

Let $Z_{2n}$ be the triangle in $\Sigma$ with vertices $x_n$, $(\ell_\infty(n),\ell_0(-n))$, and $(\ell_\infty(-n),\ell_0(n))$. Let
\begin{equation*}
B_{2n} = Z_{2n} - \begin{pmatrix} 1 & t^{2n} \\ 0 & 1 \end{pmatrix} Z_{2n} - \begin{pmatrix} 1 & t^{-2n} \\ 0 & 1 \end{pmatrix} Z_{2n} + \begin{pmatrix} 1 & t^{-2n}+t^{2n} \\ 0 & 1 \end{pmatrix} Z_{2n}
\end{equation*}

Then $B_{2n}$ is a square with $B_{2n} \cap S_{2n}^\downarrow = C^{2n}_{0,0} - C^{2n}_{1,0} - C^{2n}_{0,1} + C^{2n}_{1,1}$.

\begin{lemma}\label{lift}
For every $n \in \NN$, there is some disc $\widetilde{B_{2n}} \subset X_3$ such that $\psi(\widetilde{B_{2n}}) \cap H = B_{2n}$. 
\end{lemma}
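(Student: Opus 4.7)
My plan is to build $\widetilde{B_{2n}}$ by lifting the (suitably reduced) boundary $\partial B_{2n}$ to a loop in $X_1$, then capping off in $X_2 \subset X_3$ using the simple connectivity of $X_2$. The key is to choose the null-homotopy so that its image realizes $B_{2n}$ inside $H$, with the correction $2$-chain pushed entirely outside $H$.

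First, I would analyze $\partial B_{2n}$ explicitly. Both $u_1 = \begin{pmatrix} 1 & t^{2n} \\ 0 & 1 \end{pmatrix}$ and $u_2 = \begin{pmatrix} 1 & t^{-2n} \\ 0 & 1 \end{pmatrix}$ fix the top corner of $Z_{2n}$; moreover $u_1$ fixes $p = (\ell_\infty(2n), \ell_0(-2n))$ and $u_2$ fixes $q = (\ell_\infty(-2n), \ell_0(2n))$. The sides of the four translates $Z_{2n}, u_1 Z_{2n}, u_2 Z_{2n}, u_1 u_2 Z_{2n}$ incident to these shared corners coincide pairwise and cancel in the alternating sum, leaving $\partial B_{2n}$ as a closed loop along the four ``hypotenuse'' sides through the vertices $p$, $q$, $u_2 p$, $u_1 q$. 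Since $p = D^n \cdot x_0$ and $q = D^{-n} \cdot x_0$ for $D = \begin{pmatrix} t & 0 \\ 0 & t^{-1} \end{pmatrix} \in \Gamma$ and $u_1, u_2 \in \Gamma$, all four vertices lie in $\Gamma \cdot x_0 = \psi_0(X_0)$, and all lie at $\beta_\rho = 0$, hence outside $H$.

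Second, the four vertices lift to $X_0$, and connecting them by edges of $X_1$ (which exist between every pair of $X_0$-vertices and whose $\psi_1$-images lie in $X \setminus \bigcup_\gamma \gamma H$) gives a loop $\tilde L \subset X_1$. The 1-cycle $\psi_1(\tilde L) - \partial B_{2n}$ lies entirely outside $H$; since $X \setminus H$ is simply connected (by Mayer--Vietoris, using that $X$, $H$, and the horosphere $\partial H$ are each contractible), this 1-cycle bounds a 2-chain $\sigma \subset X \setminus H$. By simple connectivity of $X_2$, $\tilde L$ bounds a 2-disc $\widetilde{B_{2n}}$ in $X_2 \subset X_3$, and we define the null-homotopy $\psi_2(\widetilde{B_{2n}}) := B_{2n} + \sigma$. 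This is valid because $\partial(B_{2n} + \sigma) = \psi_1(\tilde L)$, and since $\sigma$ has no cells in $H$, we obtain $\psi(\widetilde{B_{2n}}) \cap H = B_{2n}$.

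The main technical obstacle is executing the cancellations in $\partial B_{2n}$ by carefully tracking the fixing conditions for $u_1$ and $u_2$ on the corners of $Z_{2n}$, and then confirming the simple connectivity of $X \setminus H$, which depends on the tree-like structure of the horosphere $\partial H$ in the product-of-trees complex.
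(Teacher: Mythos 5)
Your approach is essentially the paper's: identify the four $\beta_\rho$-minimal vertices of $B_{2n}$ as $\Gamma$-translates of $x_0$ (hence points of $\psi(X_0)$), lift them to $X_0$, join the lifts by edges of $X_1$ (whose $\psi_1$-images are constructed to avoid $\Gamma H$), and fill the resulting loop using the high connectivity of the classifying space. The paper, in fact, says less than you do at the filling step --- it asserts directly that the loop $c$ bounds a disk $B_{2n}^\prime$ with $B_{2n}^\prime \cap H = B_{2n} \cap H$ --- whereas you supply the missing justification: the 1-cycle $c - \partial B_{2n}$ lies outside the interior of $H$ and, by a Mayer--Vietoris argument, bounds a $2$-chain $\sigma$ outside $H$. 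That is a genuine gap in the paper's exposition that you correctly identified and closed. (Minor caveat: the argument requires $\tilde H_1(\partial H) = 0$, not merely connectedness of $\partial H$; you overclaim contractibility, which is believable for horospheres in a product of trees but would deserve a sentence of justification.)

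One step you should phrase differently: you ``define $\psi_2(\widetilde{B_{2n}}) := B_{2n} + \sigma$,'' but $\psi$ is already a fixed map --- you cannot redefine it on a cell that has been chosen independently. Fortunately this is harmless: since $X$ is a contractible $2$-dimensional complex, a cellular $2$-chain is determined by its boundary, so \emph{any} filling disk $\widetilde{B_{2n}}$ of $\tilde L$ in $X_2$ automatically satisfies $\psi(\widetilde{B_{2n}}) = B_{2n} + \sigma$ as a chain. Replace the ``define'' with this uniqueness observation and the argument is complete. (You also implicitly use the corrected indexing in which $Z_{2n}$ has apex $x_{2n}$ and base vertices $D^{\pm n}x_0$ at $\beta_\rho = 0$; the paper's stated definition of $Z_{2n}$ does not match its own later assertion that $B_{2n}$ has vertices $ax_0, a^{-1}x_0, uax_0, ua^{-1}x_0$ with $a = D^n$, so your reading is the internally consistent one.)
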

\begin{proof}
Letting $u = \begin{pmatrix} 1 & t^{-2n}+t^{2n} \\ 0 & 1 \end{pmatrix}$ and $a = \begin{pmatrix} t^n & 0 \\ 0 & t^{-n} \end{pmatrix}$, the vertices of $B_{2n}$ are $a x_0$, $a^{-1}x_0$, $ua x_0$, and $ua^{-1}x_0$. In particular, these four vertices are in $\Gamma x_0$ and therefore in $\psi(X_0)$. Let $v_i$ and $v_j$ be two vertices on an edge of the square, with $\widetilde{v_i} \in \psi^{-1}(v_i)$ and $\widetilde{v_j} \in \psi^{-1}(v_j)$. Since $X_3$ is connected, there is a path $\widetilde{c_{i,j}}$ between $\widetilde{v_i}$ and $\widetilde{v_j}$. Thus $c_{i,j} = \psi(\widetilde{c_{i,j}})$ is a path connecting $v_i$ and $v_j$. By the definition of $\psi$, $c_{i,j}$ is disjoint from $\Gamma H$. Repeating this for all four edges gives a path $c$ in $X$ which bounds a disk $B_{2n}^\prime$ such that $B_{2n} \cap H = B_{2n}^\prime \cap H$ and is the image of a path $\widetilde{c}$ in $X_3$. Since $X_3$ is 2-connected, there is a disk $\widetilde{B_{2n}}$ that fills $\widetilde{c}$, and $\psi(\widetilde{B_{2n}}) = B_{2n}^\prime$.
\end{proof}
 
\begin{lemma}\label{independence}
$\Phi_{2n}(\Gamma \widetilde{B_{2n}})=1$ and $\Phi_k(\Gamma \widetilde{B_{2n}})=0$ for $k>2n$.
\end{lemma}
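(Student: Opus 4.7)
The plan is to evaluate the defining sums coset-by-coset in $U_\Gamma \backslash \Gamma$. A preliminary step is to refine the choice of lift in Lemma \ref{lift}: because the horoballs $\gamma H$ with $\gamma \not\in P_\Gamma$ are pairwise disjoint from $H$ (Lemma \ref{nooverlap}) and the boundary path $\widetilde{c}$ already lies outside $\Gamma H$, we may arrange for $\psi(\widetilde{B_{2n}})$ to enter no $\Gamma$-translate of $H$ other than $H$ itself. With this choice, cosets outside $P_\Gamma$ can be ruled out cleanly.

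For $\Phi_{2n}(\Gamma \widetilde{B_{2n}}) = 1$ I will show that the identity coset contributes exactly $1$ and every other coset contributes $0$. For the identity term, since $S_{2n}^\downarrow \subset H$ and $U^{2n}$ preserves $H$ (Lemma \ref{psstabilizer}), only the cells of $\psi(\widetilde{B_{2n}}) \cap H = B_{2n}$ can contribute, and because $\Sigma$ is a fundamental domain for $U$ (Lemma \ref{sigmafundamental}) these cells remain distinct under $\theta_{2n}$; hence the contribution equals
\[
\varphi_{2n}\bigl(C^{2n}_{0,0} - C^{2n}_{1,0} - C^{2n}_{0,1} + C^{2n}_{1,1}\bigr) = 0 - 0 - 0 + 1 = 1.
\]
For $g \not\in P_\Gamma$, the refined lift together with Lemma \ref{nooverlap} gives $g\psi(\widetilde{B_{2n}}) \cap H = \emptyset$, so the contribution vanishes. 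For $g \in P_\Gamma \setminus U_\Gamma$, coset representatives (modulo the subgroup of $A_\Gamma$ that acts trivially on $X$) may be taken to be nonzero powers $D^j$; since $U$ preserves both $\beta_\infty$ and $\beta_0$ while $D^j$ shifts them by $\pm 2j$, the $\beta_\infty$-range of 2-cells in $U^{2n} D^j B_{2n}$ is disjoint from that of cells in $S_{2n}^\downarrow$, and the contribution vanishes.

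For $\Phi_k(\Gamma \widetilde{B_{2n}}) = 0$ when $k > 2n$, a Busemann-value comparison suffices. Every 2-cell of $B_{2n}$ has $\beta_\rho < 4n$ on its interior, with the maximum value $\beta_\rho = 4n$ attained only at the vertex $x_{2n}$. Every 2-cell of $U^k S_k^\downarrow$ has $\beta_\rho \in [2k-2, 2k]$; since $U^k$ preserves $\beta_\rho$ and $k > 2n$ forces $2k-2 \geq 4n$, the interior $\beta_\rho$ values of such cells exceed $4n$. For $g \in P_\Gamma$, $g$ preserves $\beta_\rho$, so the $\beta_\rho$-ranges remain disjoint and $gB_{2n}$ shares no 2-cell with $U^k S_k^\downarrow$. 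For $g \not\in P_\Gamma$, the refined lift forces $g\psi(\widetilde{B_{2n}}) \cap H = \emptyset$ as before. Every coset therefore contributes $0$.

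The main obstacle is the $P_\Gamma \setminus U_\Gamma$ case of the first claim, where one must verify that a nontrivial shift by $D^j$, combined with the $U^{2n}$-quotient, really does move $D^j B_{2n}$ off the $U^{2n}$-orbit of $S_{2n}^\downarrow$. The Busemann-function argument outlined above handles this, but it relies on knowing that $U$ preserves $\beta_\infty$ and $\beta_0$ separately and that $D^j$ shifts them in the expected way — facts that are implicit in the proof of Lemma \ref{psstabilizer} and must be invoked explicitly here.
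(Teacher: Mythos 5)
Your argument is correct in outline and is close in spirit to the paper's own proof: both split the coset sum into the identity coset, cosets inside $P_\Gamma \setminus U_\Gamma$, and cosets outside $P_\Gamma$; both dispose of the last case via the fact that $\psi(\widetilde{B_{2n}})$ meets $\Gamma H$ only inside $H$ (which you rightly point out must be arranged when the filling disk is chosen, a step the paper leaves implicit in Lemma \ref{lift}); and both handle $\Phi_k$ for $k > 2n$ by comparing the $\beta_\rho$-ranges of $B_{2n}$ and of $S_k^\downarrow$. Where you genuinely diverge is the $P_\Gamma\setminus U_\Gamma$ case of the computation $\Phi_{2n}(\Gamma\widetilde{B_{2n}})=1$: the paper factors $p = au$ with $a \in A_\Gamma$ nontrivial and appeals to $aS_{2n}\cap S_{2n}=\emptyset$ (disjointness of vertex stars), while you track the $\beta_\infty$-shift of $D^j$ directly. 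These are essentially equivalent, but the star-disjointness formulation is cleaner since it packages the needed comparison into a single vertex-level statement.

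Two steps need tightening. First, as literally written, ``the $\beta_\infty$-range of 2-cells in $U^{2n}D^jB_{2n}$ is disjoint from that of cells in $S_{2n}^\downarrow$'' is false for small nonzero $j$: the triangle $Z_{2n}$ sweeps $\beta_\infty$ over the full interval $[-2n,2n]$, so after shifting by $2j$ some cell of $D^jB_{2n}$ still has $\beta_\infty$-range $[2n-1,2n]$. What rescues the argument is the $\beta_\rho$-constraint that you invoke only in the other half of the lemma: cells of $U^{2n}S_{2n}^\downarrow$ have $\beta_\rho\in[4n-2,4n]$, and the only cells of $B_{2n}$ with $\beta_\rho$ in that band are the four cells $C^{2n}_{a,b}$ with $a,b\in\{0,1\}$, each of which has $\beta_\infty\in[2n-1,2n]$; for those four cells the $2j$-shift of $\beta_\infty$ does give disjointness. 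You should state this combination explicitly (equivalently: $D^j$ moves $x_{2n}$ off the $U$-orbit of $x_{2n}$ whenever $j\neq 0$, so $D^jB_{2n}$ contains no cell in the star of any $U^{2n}$-translate of $x_{2n}$). Second, the justification that the four cells ``remain distinct under $\theta_{2n}$'' by appeal to Lemma \ref{sigmafundamental} cites the wrong fact; the relevant one is that each $C^{2n}_{a,b}$, $a,b\in\{0,1\}$, is a $U_{2n}$-translate of $C^{2n}_{0,0}\subset\Sigma$, that $U_{2n}=\operatorname{Stab}_U(x_{2n})$, and hence these cells already lie in the fundamental domain $Y_{2n}$ on which $\theta_{2n}$ is the identity. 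Neither point is a fatal gap; both are short repairs that bring your argument into line with the paper's.
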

\begin{proof}

We will prove that $\Phi_k(\Gamma \widetilde{B_{2n}})=0$ by showing that $\gamma B_{2n} \cap S_k^\downarrow = \emptyset$ for every $\gamma \in \Gamma$. Since $B_{2n} \cap H \subset \beta_\rho^{-1}([R+d,2n])$ and $P_\Gamma$ preserves $\beta_\rho$, we have $pB_{2n}\cap S_k^\downarrow = \emptyset$ for every $p \in P_\Gamma$. For $\gamma \in \Gamma - P_\Gamma$, $\gamma H \cap H = \emptyset$. Since $B_{2n} \cap \Gamma H \subset H$, this implies that $\gamma B_{2n} \cap S_{k}^\downarrow = \emptyset$.

By the definitions of $B_{2n}$ and $\varphi_{2n}$, we have $\varphi_{2n}(B_{2n})=1$. It therefore suffices to show that $\gamma B_{2n} \cap S_{2n} = \emptyset$ for every $\gamma \in \Gamma - U_\Gamma$.

Let $p \in P_\Gamma - U_\Gamma$. Since $P_\Gamma$ preserves $\beta_\rho$ and $B_{2n} \cap \beta_\rho^{-1}([2n-1,2n+1]) \subset S_{2n}$, it follows that $pB_{2n} \cap \beta_\rho^{-1}([2n-1,2n+1]) \subset p S_{2n}$. Let $p=au$ for some $a \in A_\Gamma$ and $u \in U_\Gamma$ with $a \neq \id$. Since $uS_{2n} = S_{2n}$ and $aS_{2n} \cap S_{2n} = \emptyset$, we have $pB_{2n} \cap S_{2n} = \emptyset$.

For $\gamma \in \Gamma - P_\Gamma$, $\gamma H \cap H = \emptyset$. Since $B_{2n} \cap \Gamma H \subset H$, this implies that $\gamma B_{2n} \cap S_{2n} = \emptyset$.

\end{proof}

Thus, for any $k$ and any even integer $m \geq R+d$, the set of cocycles \[\left\{ \Phi_{m+2}, \Phi_{m+4}, \dots \Phi_{m+2k}\right\}\] is independent, which suffices to prove Theorem \ref{mainthm}.

\bibliographystyle{amsalpha}
\bibliography{../general.bib}

\Addresses

\end{document}